\newcommand{\bd}{\begin{description}}
\newcommand{\ed}{\end{description}}
\newcommand{\bi}{\begin{itemize}}
\newcommand{\ei}{\end{itemize}}
\newcommand{\be}{\begin{enumerate}}
\newcommand{\ee}{\end{enumerate}}
\newcommand{\beq}{\begin{equation}}
\newcommand{\eeq}{\end{equation}}
\newcommand{\beqs}{\begin{eqnarray*}}
\newcommand{\eeqs}{\end{eqnarray*}}
\newcommand{\Rmnum}[1]{\expandafter\@slowromancap\romannumeral #1@}
\definecolor{DarkGreen}{rgb}{0.2, 0.6, 0.3}
\newtheorem{theorem}{Theorem}[section]
\newtheorem{lemma}{Lemma}[section]
\newtheorem{case}{Case}
\newtheorem{claim}{Claim}
\newtheorem{problem}{Problem}
\begin{document}
\title{Dense $2$-connected planar graphs and the planar Tur\'{a}n number of $2C_k$}
\author{\small Ping Li\\
{\small School of Mathematics and Statistics}\\
{\small  Shaanxi Normal University, Xi'an, Shaanxi 710062, China, Email: lp-math@snnu.edu.cn}
}

\date{}

\maketitle

\begin{abstract}
Shi, Walsh and Yu demonstrated that any dense planar graph with certain property (known as circuit graph) contains a large near-triangulation.
We extend the result to $2$-connected plane graphs, thereby addressing a question posed by them.
Using the result, we prove that the planar Tu\'{a}n number of $2C_k$ is $\left[3-\Theta(k^{\log_23})^{-1}\right]n$ when $k\geq 5$.\\[0.2cm]
{\bf Keywords:} circuit graph; planar Tu\'{a}n number of $2C_k$; near-triangulation; $2$-connected plane graph\\[2mm]
{\bf AMS subject classification 2020:} 05C35.
\end{abstract}

\section{Introduction}

To introduce the main content of this paper, we will begin with some key notations.
We refer to a planar embedding of a planar graph as a {\em plane graph}.
In a plane graph $G$, there is exactly one unbounded face, which is call the {\em outer face} of $G$.
Conversely, all other faces within $G$ are referred to as {\em inner faces}.
For a face $F$ of the plane graph $G$, we often 
use $E(F)$ and $V(F)$ to denote the sets of edges and vertices in the boundary of $F$, respectively.
If the boundary of the face $F$ is a cycle of length $k$, then we call $F$ a {\em $k$-face} and call its boundary the {\em facial cycle} of $F$.
If $F$ is an inner face of $G$ and $e(F)\geq 4$, then we say $F$ is a {\em hole} of $G$.
It is clear that each inner face of $G$ is either a $3$-face or a hole.
If  $G$ is a $2$-connected plane graph and each of its inner face is a $3$-faces, then we say $G$ is a {\em near-triangulation}.
A {\em circuit graph} is a pair $(G,C)$, where $G$ is a $2$-connected plane graph and $C$ is the facial cycle of the outer face of $G$, such that for each $2$-cut $S$ of $G$, each component of $G-S$ contains a vertex of $C$.

The {\em planar Tur\'{a}n number} of $G$, denoted by $ex_{\mathcal{P}}(n,G)$, is the maximum number of edges in an $n$-vertex $G$-free planar graph. This well-studied topic was initially explored by Dowden \cite{Dowden} in 2016.
For the planar Tur\'{a}n number of cycles, Dowden \cite{Dowden} proved that  $ex_{\mathcal{P}}(n,C_4)\leq 15(n-2)/7$ when $n\geq 4$ and $ex_{\mathcal{P}}(n,C_5)\leq (12n-33)/5$ when $n\geq 11$. Ghosh, Gy\H{o}ri, Martin, Paulos and Xiao \cite{GGMPX} explored the planar Tur\'{a}n number of $C_6$ and proved that   $ex_{\mathcal{P}}(n,C_6)\leq 5n/2-7$, which improve a result of Lan, Shi and Song \cite{LSS}. Shi, Walsh and Yu \cite{SWY-7}, as well as Gy\H{o}ri, Li and Zhou \cite{GLZ-7}, independently  proved $ex_{\mathcal{P}}(n,C_7)\leq (18n-48)/7$. All above bounds are tight for infinite many $n$.
Ghosh, Gy\H{o}ri, Martin, Paulos and Xiao \cite{GGMPX} proposed a conjecture for $ex_{\mathcal{P}}(n,C_k)$.
Cranston, Lidick\`{y}, Liu and Shantanam \cite{CLLS-count}, and Lan and Song \cite{LS}  disproved the conjecture, by using results from \cite{CY,Moon}.
In fact, the existence of long cycles in planar graphs remains a mystery.
Moon and Moser \cite{Moon} constructed large planar triangulations where the length of the longest cycle is small, and proposed a conjecture that any $3$-connected planar graph contains a cycle of length $\Omega(n^{\log_32})$.
Chen and Yu \cite{CY} conformed the conjecture and proved the following result. 
\begin{theorem}[Chen and Yu \cite{CY}]
For all $k\geq 3$, if $(G,C)$ is a circuit graph with at least $k^{\log_23}$ vertices, then $G$ has a cycle of length at least $k$.
\end{theorem}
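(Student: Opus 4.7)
The plan is to prove the statement by strong induction on $n = |V(G)|$, which amounts to showing that any circuit graph $(G,C)$ on $n$ vertices contains a cycle of length at least $\lceil n^{\log_3 2} \rceil$. The base cases ($n \leq 4$, say) are trivial since a circuit graph is $2$-connected and hence already contains a cycle. The motivation for the exponent is the Moon--Moser construction: a balanced recursive decomposition of a triangle into three smaller triangles, in which any cycle can visit only $2$ of the $3$ subpieces at each scale, produces the recurrence $f(n) = 2f(n/3)$ with solution $f(n) = \Theta(n^{\log_3 2})$. The task of the proof is to establish that this construction is essentially extremal.

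For the inductive step I would look for a separating structure that splits $G$ into smaller circuit graphs. The most useful candidates are non-trivial $2$-cuts: a $2$-cut $\{x,y\}$ decomposes $G$ into components of $G - \{x,y\}$, each of which --- augmented by $x$, $y$, and either a suitable arc of $C$ or a new edge $xy$ --- is again a circuit graph, because the circuit graph hypothesis forces every component of $G-\{x,y\}$ to meet $C$. Any cycle of $G$ through $\{x,y\}$ decomposes into two internally disjoint $x$--$y$ paths lying in two different pieces; so the inductive task reduces to finding long $x$--$y$ paths in pairs of subpieces. Choosing $\{x,y\}$ so that the two largest pieces each have roughly $n/3$ vertices, the recursion $2 \cdot (n/3)^{\log_3 2} = n^{\log_3 2}$ then delivers the desired bound.

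The main obstacle is the case in which $G$ has no useful balanced $2$-cut, that is, $G$ is essentially $3$-connected near its outer boundary. Here I would invoke a Tutte path argument adapted to circuit graphs: between any two prescribed vertices $u,v$ of $C$ and through any prescribed edge $e$ of $C$, there is a path $P$ in $G$ whose bridges each attach to $P$ at $\leq 3$ vertices (and at $\leq 2$ vertices if the bridge meets $C$). Such a $P$ together with an arc of $C$ from $u$ to $v$ already forms a cycle, and the bridges --- which are themselves circuit graphs rooted at their attachments --- can be recursed into. Making the accounting work across the bridges so that the accumulated length matches the $2f(n/3)$ recurrence, and choosing $u$, $v$, $e$ so that at least one resulting subproblem inherits at least $n/3$ vertices while a sibling contribution pays for the remaining $(n/3)^{\log_3 2}$ factor, is where the technical bulk of the proof lies.
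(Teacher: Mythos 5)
This statement is Theorem 1.1 of the paper, which is quoted from Chen and Yu \cite{CY} and is not proved here at all; there is no in-paper argument to compare yours against, so your proposal has to stand on its own as a proof of the Chen--Yu theorem. As an outline it identifies the right ingredients --- the Moon--Moser recurrence $f(n)=2f(n/3)$, decomposition along $2$-cuts into smaller circuit graphs, and Tutte paths for the essentially $3$-connected case --- and this is indeed the general strategy of \cite{CY}. But as written it is a plan rather than a proof, and it has one structural gap that would stop the induction from closing.

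The gap is the mismatch between your induction hypothesis and what the recursion consumes. You induct on the statement ``every circuit graph on $n$ vertices contains a cycle of length at least $n^{\log_3 2}$,'' but in the $2$-cut case you immediately need something different: a \emph{long $x$--$y$ path} in each of two pieces, for a prescribed pair $\{x,y\}$ lying on the boundary of the piece. A long cycle in a subpiece need not pass through $x$ and $y$, so the hypothesis as stated cannot be applied to the subpieces to produce the two arcs you want to concatenate. The actual argument of Chen and Yu resolves this by proving a stronger, path-form statement (long paths between prescribed vertices of the outer cycle, with control over how bridges attach), and that strengthened hypothesis is what makes both the $2$-cut recursion and the Tutte-path accounting go through. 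You gesture at exactly this in your last paragraph but explicitly defer it as ``the technical bulk,'' together with the existence of a suitably balanced $2$-cut and the bookkeeping that turns bridge contributions into the $2f(n/3)$ recurrence. Until the induction hypothesis is reformulated in path form and that accounting is carried out, the proof is incomplete.
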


Recently, Shi, Walsh, and Yu discussed the dense circuit graph and utilized a particular result from it to derive an upper bound of $ex_{\mathcal{P}}(n,C_k)$.
For a $2$-connected planar graph $G$, we write $m(G)$ as the number of chords of holes in $G$ required to make $G$ a near-triangulation, and each of these edges is called a {\em missing edge} in $G$. So,
$$m(G)=\sum_{F\mbox{ is a hole of }G}(|F|-3).$$
Shi, Walsh and Yu \cite{SWY} obtained the following result.
\begin{theorem}[Shi, Walsh and Yu \cite{SWY}]\label{thm-yu}
For all $t\geq  4$, if $(G,C)$ is a circuit graph with $|G|>t$ and with outer cycle $C$ so that $m(G)<\frac{n-(t-1)}{3t-7}$, then $G$ has a near-triangulation subgraph $T$ with $|T|>t$.
The bound is tight.
\end{theorem}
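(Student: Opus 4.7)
The natural approach is induction on the number of missing edges $m(G)$. The base case $m(G)=0$ is immediate: every inner face of $G$ is a triangle, so $G$ itself is a near-triangulation on $n>t$ vertices, and we take $T=G$.

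For the inductive step, assume $m(G)\geq 1$ and that the statement holds for every circuit graph with strictly fewer missing edges. Pick any hole $F$ of $G$. The plan is to find two non-consecutive vertices $u,v$ on $\partial F$ such that either (a) $uv$ is an edge of $G$ (a chord of $\partial F$), or (b) $\{u,v\}$ is a $2$-cut of $G$ separating the two arcs of $\partial F$. Given such a pair, decompose $G$ along $\{u,v\}$ into plane graphs $(G_1,C_1)$ and $(G_2,C_2)$ with $V(G_1)\cap V(G_2)=\{u,v\}$, adjoining a virtual edge $uv$ to each piece in case (b) so that each still has a cycle outer boundary. Writing $P_1,P_2$ for the two arcs of $\partial F$, the hole $F$ is replaced in $G_i$ by a face of boundary $P_i\cup\{uv\}$ of length $|P_i|+1$, giving $|V(G_1)|+|V(G_2)|=n+2$ and $m(G_1)+m(G_2)\leq m(G)-1$.

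A short calculation closes the induction. If both pieces violated the hypothesis we would chain
\[
m(G_1)+m(G_2)\;\geq\;\frac{n-2t+4}{3t-7}\;>\;m(G)-1\;\geq\;m(G_1)+m(G_2),
\]
where the strict middle inequality combines the hypothesis $m(G)<(n-(t-1))/(3t-7)$ with $t\geq 4$ (equivalently, $n-2t+4\geq n-4t+8$). So some piece, say $G_1$, satisfies the hypothesis, and $m(G_1)\geq 0$ then forces $|V(G_1)|\geq t$. The borderline case $|V(G_1)|=t$ (which forces $m(G_1)=0$ and yields a near-triangulation of exactly $t$ vertices) is absorbed by shifting the argument to $G_2$, which has $|V(G_2)|=n+2-t\geq 3t-5>t$ (using $n\geq 4t-7$, i.e.\ the hypothesis plus $m(G)\geq 1$) and for which $m(G_2)\leq m(G)-1<(|V(G_2)|-(t-1))/(3t-7)$ by the same arithmetic. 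Induction on the chosen piece then produces the desired near-triangulation $T$ of order $>t$.

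The main obstacle is guaranteeing existence and good behaviour of the separating pair $\{u,v\}$. One must verify that every hole of a circuit graph admits such a pair, that both pieces $G_i$ are themselves circuit graphs (using the defining assumption that every $2$-cut of $G$ meets $C$ in each component), and---most delicately in case (b)---that the near-triangulation found inductively inside $G_i$ can be chosen to avoid the virtual edge $uv$, so that it lifts back to a subgraph of $G$. The cleanest resolution of the last point is probably to strengthen the inductive statement to allow a designated edge of the outer cycle that $T$ must avoid, and check that all the bookkeeping above survives unchanged. Tightness of the bound should then be exhibited by a chain construction gluing $\Theta(m(G))$ near-triangulations of order $t$ along $2$-separations, separated by holes of size roughly $3t-5$.
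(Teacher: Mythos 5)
There is a genuine gap, and it lies exactly where you flagged ``the main obstacle'': the separating pair $\{u,v\}$ you need on the boundary of the hole $F$ simply need not exist. A circuit graph can be $3$-connected (any $3$-connected plane graph with any facial cycle as outer cycle is a circuit graph), in which case it has no $2$-cuts at all, and a hole can be chordless. Concretely, delete one edge from the icosahedron and take any remaining triangle as the outer face: this is a $3$-connected circuit graph with $n=12$, $m(G)=1$, a single $4$-face hole whose two diagonals are non-edges, and it satisfies the hypothesis for $t=4$. Your induction cannot take its first step on this graph, so the approach of splitting $G$ along a chord or $2$-cut of $\partial F$ cannot be repaired by ``verification''; it has to be replaced. (The secondary issues you list --- whether the pieces are circuit graphs, and how to keep the inductively found near-triangulation away from the virtual edge --- are real but moot until the decomposition exists.)

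The decomposition that actually works, used in \cite{SWY} and mirrored in this paper's proof of Theorem \ref{thm-near} via Lemma \ref{lem-main-1}, cuts in the transverse direction: by $2$-connectivity (Menger) there are two vertex-disjoint paths $L_1,L_2$ from the outer cycle $C$ to the hole $F$, meeting $C$ only in $x_1,x_2$ and $F$ only in $y_1,y_2$. The cycle $x_1L_1y_1\overleftarrow{F}y_2L_2x_2\overrightarrow{C}x_1$ and its complementary cycle $x_1L_1y_1\overrightarrow{F}y_2L_2x_2\overleftarrow{C}x_1$ split $G$ into two $2$-connected pieces $D$ and $H$ overlapping in at least four vertices; choosing $F,L_1,L_2$ to minimize $D$ forces $D$ to be a near-triangulation, and $H$ satisfies $m(H)\leq m(G)-1$, after which an arithmetic step very much like yours (and a separate argument when $F$ meets $C$) closes the induction. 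So your bookkeeping at the end is essentially the right shape, but it is attached to a decomposition that does not exist in general; the missing idea is to route the cut through the outer cycle rather than across the hole.
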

Using the result, they obtained the following planar Tur\'{a}n number of $C_k$.
\begin{theorem}[Shi, Walsh and Yu \cite{SWY}]\label{thn-ck}
For all $k\geq 4$ and $n\geq k^{\log_23}$, $ex_{\mathcal{P}}(n,C_k)\leq 3n-6-\frac{n}{4k^{\log_23}}$.
\end{theorem}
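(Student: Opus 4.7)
The goal is to bound $|E(G)|$ from above for an $n$-vertex $C_k$-free planar graph $G$ with $n\ge k^{\log_23}$. Write $t:=k^{\log_23}$. The plan is to apply Theorem~\ref{thm-yu} in contrapositive to force many missing edges in the $2$-connected pieces of $G$, then convert the missing edges into an edge-deficit compared with the triangulation bound $3n-6$.

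\emph{Step 1 (bridge Chen--Yu to $C_k$).} I would first prove the key auxiliary claim: every near-triangulation $T$ on more than $t$ vertices contains a copy of $C_k$. Since $T$ is a near-triangulation, $(T,\partial T)$ is itself a circuit graph, so Chen--Yu's theorem produces a cycle $C^\ast$ in $T$ of length at least $k$. From here the plan is a length-reduction argument: because every inner face of $T$ is a triangle, each edge of $C^\ast$ lies in a triangular face, and the third vertex of that face either lets us short-circuit $C^\ast$ via a chord (dropping the length by one) or can be spliced into $C^\ast$; iterating until the current length equals $k$ yields $C_k$.

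\emph{Step 2 (apply Theorem~\ref{thm-yu} on circuit-graph pieces).} Next, reduce $G$ to circuit-graph pieces. Blocks of size at most $t$ contribute at most $3|B|-6$ edges each, which is absorbed into the error term. For a block $B$ with $|B|>t$: if $B$ is a circuit graph with outer cycle $C_B$, apply Theorem~\ref{thm-yu} contrapositively: since $B$ is $C_k$-free, by Step~1 it has no near-triangulation subgraph on more than $t$ vertices, hence $m(B)\ge (|B|-(t-1))/(3t-7)$. If $B$ is not a circuit graph, decompose $B$ along its non-outer-meeting $2$-cuts into circuit-graph pieces (Tutte's $3$-block decomposition), apply the bound to each piece, and observe that $m(\cdot)$ is additive over the decomposition up to a controlled overcount at the cut-pairs.

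\emph{Step 3 (assembly).} For a $2$-connected piece $H$ with outer cycle $C_H$, the identity $|E(H)|=3|H|-3-|C_H|-m(H)\le 3|H|-6-m(H)$ combined with the lower bound from Step~2 gives $|E(H)|\le 3|H|-6-|H|/(4t)$, once one checks $(|H|-(t-1))/(3t-7)\ge |H|/(4t)$ for $t\ge 3$ and $|H|\ge 2t$ (say). Summing over pieces with standard care for overcounting at cut vertices and cut pairs produces $|E(G)|\le 3n-6-n/(4t)$.

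\emph{Main obstacle.} The hard part will be Step~1: converting Chen--Yu's long cycle into $C_k$ inside a near-triangulation. The length-reduction must terminate at exactly $k$ without getting stuck, which requires careful handling of cycles that hug the outer boundary and of iterated chord replacements. A secondary difficulty is the additivity of $m(\cdot)$ under the $2$-cut decomposition in Step~2; this is precisely the bookkeeping obstacle that the present paper's main result---the extension of Theorem~\ref{thm-yu} from circuit graphs to arbitrary $2$-connected plane graphs---is designed to avoid.
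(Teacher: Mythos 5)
First, note that the paper does not prove this statement: Theorem~\ref{thn-ck} is quoted from [SWY], and the present paper only reuses its ingredients (Theorem~\ref{thm-yu}, Lemma~\ref{lem-theta-ck}, and the missing-edge/Euler-formula count) in the analogous Case~1.1 of the proof of Theorem~\ref{thm-2ck}. Your overall architecture is the intended one, but two steps have real problems. Your Step~1 is exactly Lemma~\ref{lem-theta-ck}, itself a cited result, and your sketch of it does not work as written: splicing the third vertex of a facial triangle into $C^\ast$ \emph{increases} the length, while short-circuiting through a chord can decrease it by more than one, so the iteration is not guaranteed to land on length exactly $k$. The genuine proof is an induction on the triangulated disk bounded by the long cycle; you are right that this is a nontrivial obstacle, but flagging it is not the same as overcoming it.

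The more serious gap is Step~2. The quantity $m(\cdot)$ is defined from the faces of a fixed embedding, and it is not additive under a Tutte-style $2$-cut decomposition: splitting at a $2$-cut $\{u,v\}$ destroys or shrinks the face that contained the detached component, the virtual edge $uv$ need not be an edge of $G$, and the pieces of the $3$-block tree are $3$-connected graphs, cycles and bonds rather than the circuit graphs you need, so neither the missing-edge count nor the edge count is controlled by "a controlled overcount at the cut-pairs." The standard repair --- used both in [SWY] and in Section~4 of this paper --- is an induction on $n$: peel off a single piece at a cut vertex or at a $2$-cut violating the circuit-graph condition, apply the Tur\'an bound inductively to one side and the trivial bound $3|H|-6$ to the other, and absorb the edge loss at the cut into the deficit; the paper's Theorem~\ref{thm-near} exists precisely to make even this unnecessary for $2$-connected blocks. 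Finally, your Step~3 check is quantitatively wrong: with $t=k^{\log_23}$, the inequality $(|H|-(t-1))/(3t-7)\ge |H|/(4t)$ requires roughly $|H|\gtrsim 4t$ for large $t$, not $|H|\ge 2t$; for $t\le |H|\le 4t$ one must argue separately that a $C_k$-free plane graph on at least $t$ vertices cannot be a triangulation (a triangulation is a circuit graph, so Chen--Yu plus Lemma~\ref{lem-theta-ck} would give a $C_k$), whence $e(H)\le 3|H|-7\le 3|H|-6-|H|/(4t)$.
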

Note that every circuit graph is a $2$-connected planar graph.
They guessed that Theorem \ref{thm-yu} would also apply to $2$-connected plane graphs and posed the following problem.
\begin{problem}[Shi, Walsh and Yu \cite{SWY}]
Does Theorem \ref{thm-yu} hold for all $2$-connected plane graphs?
\end{problem}
In this paper, we first address this problem. 
\begin{theorem}\label{thm-near}
For all $t\geq 4$, if $G$ is a $2$-connected plane graph with $v(G)\geq t$ and $m(G)<\frac{v(G)-(t-1)}{3t-7}$, then $G$ has a near-triangulation subgraph $T$ with $v(T)\geq t$.
\end{theorem}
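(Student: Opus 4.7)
The plan is to reduce Theorem \ref{thm-near} to Theorem \ref{thm-yu} by induction on $v(G)$, processing one ``bad'' $2$-cut at a time. The base case $v(G)=t$ is immediate: the hypothesis $m(G)<\tfrac{1}{3t-7}$ forces $m(G)=0$, so $G$ itself is a near-triangulation and we take $T=G$.

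For the inductive step, let $C$ denote the outer facial cycle of $G$. If $(G,C)$ is already a circuit graph, we apply Theorem \ref{thm-yu} directly. Otherwise there exists a $2$-cut $\{u,v\}$ of $G$ with a component $G_1$ of $G-\{u,v\}$ satisfying $V(G_1)\cap V(C)=\emptyset$; among all such pairs, pick one minimizing $|V(G_1)|$. Write $H_1=G[V(G_1)\cup\{u,v\}]$ and $H_2=G-V(G_1)$, each inheriting the plane embedding from $G$. Since $G_1$ avoids $C$, $H_2$ retains $C$ as the boundary of its outer face, and there is a face $F_0$ of $H_2$ (the face that previously contained $G_1$) whose boundary passes through both $u$ and $v$. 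Let $H_2^+$ be obtained by drawing a virtual edge $uv$ inside $F_0$; the standard argument that removing one side of a $2$-cut and adding the cut edge preserves $2$-connectivity shows that $H_2^+$ is a $2$-connected plane graph with outer cycle $C$.

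Each inner face of $G$ lies entirely on exactly one side of $\{u,v\}$, so the holes of $H_2^+$ are the $H_2$-side holes of $G$ together with at most two new faces created when $uv$ splits $F_0$. Bounding the sizes of these new faces by the combined boundary contributions of the $H_1$-side faces of $G$, and accounting for the $|V(G_1)|$ vertices carried away with $H_1$, one verifies that the inductive hypothesis $m(H_2^+)<\tfrac{v(H_2^+)-(t-1)}{3t-7}$ continues to hold. By induction there is a near-triangulation subgraph $T\subseteq H_2^+$ with $v(T)\geq t$. If $uv\notin E(T)$, then $T\subseteq H_2\subseteq G$ and we are finished.

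The crux of the proof, and the main expected obstacle, is the case $uv\in E(T)$, since the virtual edge is not present in $G$. Here $uv$ lies on a unique triangle $uvw$ of $T$, and one must replace that triangle by a substructure from $H_1$ so that the result is a genuine near-triangulation subgraph of $G$. By the minimality of $|V(G_1)|$, every $2$-cut of $H_1+uv$ satisfies the circuit condition relative to an outer cycle through $uv$; hence $(H_1+uv,C_1)$ is a circuit graph for an appropriate $C_1$, and Theorem \ref{thm-yu} (or a recursive application of the induction) can be invoked on this smaller graph to produce a near-triangulation of $H_1$ anchored along a $u$--$v$ path, which splices into $T-\{uvw\}$ to give the desired subgraph of $G$. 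The delicate bookkeeping---ensuring the glued subgraph still has at least $t$ vertices, that $m(G)$ is large enough to drive both inductive applications, and that the virtual edge is not double-counted---is the most technical aspect of the argument.
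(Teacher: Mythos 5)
Your strategy (peel off, one at a time, the components that violate the circuit-graph condition, add a virtual edge $uv$, and recurse / invoke Theorem \ref{thm-yu}) is genuinely different from the paper's, which runs a self-contained minimal-counterexample argument: it first proves that in a minimal counterexample no hole meets the outer cycle (Lemma \ref{inter-0}), then uses Lemma \ref{lem-main-1} to cut off a near-triangulation piece $D$ between the outer cycle and a hole and recurses on the complementary piece $H$, which has strictly fewer missing edges. Unfortunately your version has two gaps that are not mere bookkeeping.

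First, the assertion that $m(H_2^+)<\frac{v(H_2^+)-(t-1)}{3t-7}$ ``continues to hold'' is false in general. The component $G_1$ avoiding $C$ can contain almost all of the vertices of $G$ while all of the holes of $G$ lie on the $H_2$ side; then $v(H_2^+)=v(G)-|V(G_1)|$ collapses while $m(H_2^+)$ stays close to $m(G)$ (the two faces created by inserting $uv$ into $F_0$ can even add new holes, since triangles of $G$ incident to $G_1$ contribute boundary edges to $F_0$ without having contributed to $m(G)$). The missing-edge count simply does not distribute in proportion to the vertex count across the two sides of the cut, so you cannot always recurse on $H_2^+$; you must choose which piece to recurse on by an extremal/averaging argument of the type the paper uses in Case 1 of Lemma \ref{inter-0}, where the piece $D_s$ maximizing $\bigl(|D_i|-(t-1)\bigr)/m(D_i)$ is selected and shown to satisfy the hypothesis.

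Second, the ``crux'' case $uv\in E(T)$ does not splice as claimed. The virtual edge $uv$ is an interior edge of the near-triangulation $T$, lying on two triangles $uvw$ and $uvw'$. If you delete $uv$ and insert a near-triangulation $T'$ of $H_1$ whose outer boundary consists of two $u$--$v$ paths $P,P'$, the former triangles become faces $uPvw u$ and $uP'vw'u$, which are holes unless $P$ and $P'$ are single edges; so the union is not a near-triangulation. Moreover Theorem \ref{thm-yu} applied to the circuit graph $(H_1+uv,C_1)$ only yields \emph{some} near-triangulation subgraph on at least $t$ vertices --- nothing forces it to be ``anchored along a $u$--$v$ path,'' and if it again contains the virtual edge $uv$ in its interior you are back where you started. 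To make this case work you would need an additional structural statement guaranteeing a large near-triangulation attached to the outer cycle in a controlled way, which is exactly the content of the paper's Lemma \ref{lem-main-1}(2); without something of that strength the induction does not close.
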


We will use the result to derive an upper bound for $ex_{\mathcal{P}}(n,2C_k)$ for $k\geq 5$ and $n\geq k^{\log_23}$, where $2C_k$ is the graph consisting of two disjoint union of $C_k$.
More generally,  we use $C_i\cup C_j$ to denote the graph consisting of disjoint union of $C_i$ and $C_j$.
For two vertex-disjoint graphs $H_1$ and $H_2$, let $H_1\vee H_2$ denote the graph obtained from $H_1$ and $H_2$ by adding all possible edges between them. 
It is clear that the planar Tur\'{a}n number of the disjoint union of $t\geq 3$ cycles is the trivial value $3n-6$, as the triangulation $K_2\vee P$ serves as an extremal graph, where $P$ is a path of order $n-2$.
Lan, Shi, Song \cite{LSS-2c3} proved that $ex_{\mathcal{P}}(n,2C_3)=\left\lceil\frac{5n}{2}\right\rceil-5$ when $n\geq 6$. Li \cite{L} proved that $ex_{\mathcal{P}}(n,C_3\cup C_4)=\left\lfloor\frac{5n}{2}\right\rfloor-4$ when $n\geq 20$, and the extremal graph is $K_2\vee M_{n-2}$, where $M_n$ is an $n$-vertex graph consisting of $\lfloor n/2\rfloor$ independent edges.
Li \cite{L} also proposed a conjecture that $ex_{\mathcal{P}}(n,2C_4)\leq 19(n-2)/7$  and the bound is tight when $14|n$. Fang, Lin and Shi \cite{FLS-ex} confirmed the conjecture by giving an exact value of $ex_{\mathcal{P}}(n,2C_4)$ when $n$ is sufficiently large. They also settled the spectral planar Tur\'{a}n number of $2C_k$ for all $k$ and sufficiently large $n$.
For more details on planar Tur\'{a}n number, we refer to the survey paper \cite{LS-o}. 
The second result of this paper is as follows.

\begin{theorem}\label{thm-2ck}
For any integers $k\geq 5$ and $n\geq k^{\log_23}$, we have that $$ex_{\mathcal{P}}(n,2C_k)=\left[3-\Theta(k^{\log_23})^{-1}\right]n.$$
More precisely, 
$$3n-12-\left[(2k/7)^{\log_23}-2\right]^{-1}(n-2)\leq ex_{\mathcal{P}}(n,2C_k)< 3n-6-\frac{n}{8k^{\log_23}}+k^3.$$
\end{theorem}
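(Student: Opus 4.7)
The plan is to prove the lower and upper bounds of Theorem~\ref{thm-2ck} separately; the upper bound is the substantive part and uses Theorem~\ref{thm-near} crucially, while the lower bound is a direct construction.

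For the \emph{lower bound}, I would construct, for each $n \geq k^{\log_2 3}$, a planar $n$-vertex graph with no $2C_k$ and the claimed number of edges. Take a triangulation $M$ on roughly $(2k/7)^{\log_2 3}$ vertices whose circumference is less than $k$; such $M$ exists from the classical Moon--Moser and Chen--Yu tight constructions for the circumference of planar triangulations. Form the graph $G_n$ by gluing a long chain of copies of $M$ along a single fixed common edge, so that the blocks of $G_n$ are precisely the copies of $M$. The shared edge is a 2-cut separating consecutive copies; since each block is $C_k$-free and every cycle of $G_n$ is confined to a single block, $G_n$ is $C_k$-free and a fortiori $2C_k$-free. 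Summing edges over the chain and subtracting the overlap along each glued edge yields the stated lower bound.

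For the \emph{upper bound}, let $G$ be an $n$-vertex planar graph with $e(G) > 3n - 6 - n/(8k^{\log_2 3}) + k^3$. First I would use the block decomposition to pass to a 2-connected subgraph $B$ inheriting essentially all of the edge surplus, so that $e(B) > 3v(B) - 6 - v(B)/(8k^{\log_2 3})$ up to lower-order corrections. Since any 2-connected plane graph with outer-cycle length $c$ satisfies $e = 3v - 3 - c - m$, this forces $m(B) < v(B)/(8k^{\log_2 3})$. Choosing $t \approx 2k^{\log_2 3}$ (with a small additive buffer absorbed by the $k^3$ term) and verifying $m(B) < (v(B) - (t - 1))/(3t - 7)$, an application of Theorem~\ref{thm-near} produces a near-triangulation subgraph $T \subseteq B$ with $v(T) \geq t$.

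The main obstacle is to extract $2C_k$ from $T$. My plan is to apply Chen and Yu's theorem to the circuit graph $(T, C_T)$ in order to obtain a cycle $C^{(1)}$ of length at least $k$ in $T$. This $C^{(1)}$ partitions the plane into an interior near-triangulation $T_1$ bounded by $C^{(1)}$ and an exterior 2-connected plane graph $T_2$ whose only hole is bounded by $C^{(1)}$. By choosing $C^{(1)}$ to be short (of length between $k$ and a modest multiple of $k$) and positioned so as to balance the two sides of $T$, one arranges that $T_2$ has enough vertices to absorb the $|C^{(1)}| - 3 = O(k)$ contribution to $m(T_2)$ from the hole; a second application of Theorem~\ref{thm-near} inside $T_2$ then produces a near-triangulation vertex-disjoint from the interior of $C^{(1)}$, and Chen--Yu applied there yields a cycle $C^{(2)}$ of length at least $k$. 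Finally, a pancyclicity-style chord-shortening lemma for near-triangulations--asserting that any cycle of length $\ell > k$ bounding a triangulated region admits a chord-split into a cycle of some length in $[k, \ell - 1]$, proved by induction on the number of enclosed vertices--refines $C^{(1)}$ and $C^{(2)}$ to exact $k$-cycles that remain vertex-disjoint. The hardest step will be this combined balancing and pancyclicity argument, which is what drives both the $n/(8k^{\log_2 3})$ denominator and the $+k^3$ slack in the stated upper bound.
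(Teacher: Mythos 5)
Your lower-bound construction coincides with the paper's (chaining Moon--Moser triangulations of order about $(2k/7)^{\log_23}$ along one common edge), but your justification is faulty: once the copies share an edge $uv$ the union is $2$-connected, the copies are \emph{not} blocks, and cycles are not confined to a single copy --- a cycle may run from $u$ through one copy to $v$ and back through another copy, so the glued graph need not be $C_k$-free. The correct argument is that any cycle of length at least $k$ must use at least two copies and therefore pass through both $u$ and $v$, so no two such cycles are disjoint; this is a one-line repair.

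The upper bound has a genuine gap at its central step, namely the extraction of two disjoint $k$-cycles. After Theorem \ref{thm-near} gives a near-triangulation $T$ with $v(T)\geq t$, you propose to choose a first cycle $C^{(1)}$ of length in $[k,O(k)]$ ``positioned so as to balance the two sides of $T$'' and then to reapply Theorem \ref{thm-near} in the exterior $T_2$. Neither Chen--Yu nor Lemma \ref{lem-theta-ck} gives any control over where the cycle sits: the $C_k$ you obtain may bound a region containing almost all of $T$, or its deletion may leave no large $2$-connected piece, and no argument is offered for the claimed balancing. Worse, even granting perfect positioning, $T_2$ acquires a hole of size $|C^{(1)}|\geq k$, so $m(T_2)\geq k-3$, and Theorem \ref{thm-near} with parameter $k^{\log_23}$ then demands $v(T_2)>(k-3)(3k^{\log_23}-7)+k^{\log_23}-1=\Omega(k^{1+\log_23})$. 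Your first application of Theorem \ref{thm-near} must therefore be run with $t=\Omega(k^{1+\log_23})$, whose contrapositive only yields $m=\Omega\left(n/k^{1+\log_23}\right)$ and hence an upper bound of the form $3n-6-O\left(n/k^{1+\log_23}\right)$, too weak to match the $\Theta(k^{\log_23})^{-1}$ coefficient in the statement. The paper's route is entirely different and avoids this: it inducts on $n$, fixes a minimum set $\{v_1,\dots,v_c\}$ with $c\leq k$ meeting every $k$-cycle (the vertex set of any single $k$-cycle is such a set, since all $k$-cycles pairwise intersect), and shows via Claim \ref{clm-222} that each hitting vertex of degree greater than $k^2$ is incident to at least $d(v_i)-k^2$ non-triangular faces --- because if $k-1$ consecutive link-neighbours of $v_i$ avoid a $k$-cycle missing $v_i$, the last of them cannot be adjacent to $v_i$ without creating a disjoint second $k$-cycle --- while if $\sum_i d(v_i)$ is small then $G-\{v_1,\dots,v_c\}$ is a $C_k$-free planar graph contradicting Theorem \ref{thn-ck}. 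You would need an argument of this kind, or an independent proof that every near-triangulation on $O(k^{\log_23})$ vertices contains $2C_k$, to close the gap.
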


The rest of the paper is organized as follows. In Section 2, we list some useful results for our proofs. In Section 3, we give a proof of Theorem \ref{thm-near}, and in Section 4, we give a proof of Theorem \ref{thm-2ck}.

\section{Preliminaries}

Suppose that $C$ is a cycle (or the boundary of a face) in a plane graph. 
Then $C$ can be represented as a closed walk $x_1x_2\ldots x_rx_1$, where $x_1,x_2,\ldots,x_r$ are ordered clockwise along $C$ (note that if $C$ bounds a face but is not a cycle, then some vertices may repeat, i.e., $x_i=x_j$ for distinct $i,j\in[r]$).
For any $i,j\in[r]$, let $x_i\overrightarrow{C}x_j=x_ix_{i+1}\ldots x_j$ and $x_i\overleftarrow{C}x_j=x_ix_{i-1}\ldots x_j$, where addition is taken modulo $r$ (throughout the paper, subscripts are always taken modulo operations).
We refer to the plane subgraph induced by edges lying on $C$ together with those strictly inside its interior as the {\em plane subgraph bounded by $C$}.
For example, consider the $2$-connected plane graph $G$ in Figure \ref{example} (a). Let $F$ be the face of $G-x$ encompassing $x$. Then the boundary of $F$ is a walk $C'=x_1x_2\ldots x_6x_4x_3x_7x_1$, and the path $x_6x_4x_3x_7$ can be written as $x_6\overrightarrow{C'}x_7$ or $x_7\overleftarrow{C'}x_6$. The plane subgraph bounded by $C'$ is illustrated in Figure \ref{example} (b).
Due to vertex repetitions, $x_4\overrightarrow{C'}x_7$ could denote either the walk $x_4x_5x_6x_4x_3x_7$ or the path $x_4x_3x_7$.
Nevertheless, we retain this notation when the meaning is clear from the context.

\begin{figure}[ht]
    \centering
    \includegraphics[width=250pt]{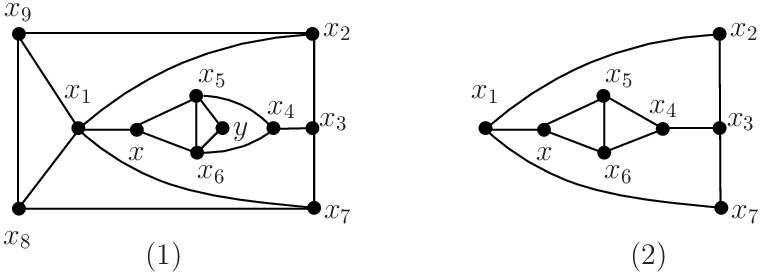}\\
    \caption{An example.} \label{example}
\end{figure}

The following result is obviously.
\begin{lemma}\label{bounded}
If $G$ is a $2$-connected plane graph and $C$ is a cycle in $G$, then the plane subgraph bounded by $C$ is also $2$-connected.
\end{lemma}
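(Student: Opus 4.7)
I would let $H$ denote the plane subgraph bounded by $C$. Since $C\subseteq H$, we have $|V(H)|\geq 3$, so it suffices to show that $H-v$ is connected for every $v\in V(H)$. I would first observe that $C-v$ (either $C$ itself or a path on $|V(C)|-1\geq 2$ vertices) is connected and lies in $H-v$, so the task reduces to showing that every $u\in V(H)\setminus\{v\}$ can be linked to a vertex of $V(C)\setminus\{v\}$ by a path inside $H-v$.

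To produce such a path, fix a $u\in V(H)\setminus\{v\}$; we may assume $u$ lies strictly inside $C$, as otherwise $u\in V(C)\setminus\{v\}$ and there is nothing to prove. Now I would invoke the $2$-connectedness of $G$: the graph $G-v$ is connected, so it contains a path $P=u_0u_1\cdots u_k$ with $u_0=u$ and $u_k\in V(C)\setminus\{v\}$. Let $i^\ast$ be the smallest index with $u_{i^\ast}\in V(C)$. The plan is to argue that the initial segment $u_0u_1\cdots u_{i^\ast}$ already lives in $H-v$.

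The key topological input is that $C$ is a Jordan curve in the plane and the embedding of $G$ has no crossings, so no edge of $G$ can join a vertex strictly inside $C$ to a vertex strictly outside $C$ (such an edge would be a Jordan arc from interior to exterior and would therefore cross $C$). With this in hand, a straightforward induction on $j$ shows that each $u_j$ with $0\leq j<i^\ast$ lies strictly inside $C$: $u_0$ does, and if $u_j$ does then the edge $u_ju_{j+1}$ forces $u_{j+1}$ to lie strictly inside $C$ or on $C$, and the minimality of $i^\ast$ rules out the latter. Consequently every edge of the subpath $u_0u_1\cdots u_{i^\ast}$ either joins two interior vertices or joins an interior vertex to $u_{i^\ast}\in V(C)$, so each such edge lies in $H$; since $P$ avoids $v$, the whole subpath lies in $H-v$ and connects $u$ to $V(C)\setminus\{v\}$, as required.

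The only real obstacle is the Jordan-curve observation that edges cannot leap from the interior of $C$ to its exterior; once this is granted, the routine induction above finishes the job, and the same argument with no vertex removed simultaneously gives connectedness of $H$ itself, so $H$ is $2$-connected.
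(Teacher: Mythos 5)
Your argument is correct. Note that the paper does not actually prove this lemma at all---it is stated with the remark ``The following result is obviously''---so there is no official proof to compare against; your write-up simply supplies the standard details. The two ingredients you use (the Jordan-curve fact that no edge of a plane graph can join a vertex strictly inside $C$ to one strictly outside, and the connectedness of $G-v$ coming from $2$-connectedness of $G$) are exactly what one would expect the author to have in mind, and your induction along the path correctly shows that the initial segment up to the first vertex of $C$ stays inside the region bounded by $C$, hence inside $H-v$. The only point worth making explicit is that an edge with one endpoint strictly inside $C$ has its entire open arc inside the interior of $C$ (it cannot cross $C$ and cannot touch $C$ except at a common endpoint), so it genuinely belongs to the subgraph bounded by $C$ as the paper defines it; you use this implicitly and it is true.
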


The following result will be used in the proof of Theorem \ref{thm-near}.

\begin{lemma} \label{lem-main-1}
Suppose that $G$ is a $2$-connected plane graph and $C$ is the facial cycle of the outer face of $G$. 
Let $P$ be a path within $C$ or $P=C$.
If $G$ is not a near-triangulation,
then for a hole $F$ of $G$, there are two vertices $x_1,x_2\in V(P)$ with $x_1\overleftarrow{C}x_2$ a sub-path of $P$, two vertices $y_1,y_2\in V(F)$, an $x_1y_1$-path $L_1$ and an $x_2y_2$-path $L_2$ such that 
\begin{enumerate}
  \item [$(1)$] $L_1,L_2$ are vertex-disjoint paths with $V(L_i)\cap V(P)=\{x_i\}$ and $V(L_i)\cap V(F)=\{y_i\}$ for $i=1,2$.
\end{enumerate}
Furthermore, if any hole of $G$ intersects $C$ in at most one vertex, then we can choose $F$, $L_1$ and $L_2$ such that
\begin{enumerate}
  \item [$(2)$]  $C^1=x_1L_1y_1\overleftarrow{F}y_2\cup L_2x_2\overrightarrow{C}x_1$ is a cycle and the plane subgraph bounded by $C^1$ is a near-triangulation.
\end{enumerate}
\end{lemma}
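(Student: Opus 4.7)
For part (1), I would use a Menger-theorem argument on an augmentation of $G$. Let $\tilde G$ be the plane graph obtained from $G$ by inserting a new vertex $p$ in the outer face adjacent to every vertex of $V(P)$, and a new vertex $f$ in the interior of $F$ adjacent to every vertex of $V(F)$. Since $G$ is $2$-connected and both $|V(P)|, |V(F)|\geq 2$, one checks that $\tilde G$ is $2$-connected, so Menger's theorem yields two internally vertex-disjoint $pf$-paths $Q_1, Q_2$. For each $i\in\{1,2\}$, let $x_i$ be the \emph{last} vertex of $V(P)$ encountered on $Q_i$ when going from $p$ to $f$, let $y_i$ be the \emph{first} vertex of $V(F)$ encountered on $Q_i$ after $x_i$, and let $L_i$ be the $x_iy_i$-sub-path of $Q_i$. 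By these choices $V(L_i)\cap V(P)=\{x_i\}$ and $V(L_i)\cap V(F)=\{y_i\}$, while $L_1, L_2$ inherit vertex-disjointness from $Q_1, Q_2$. Finally swap the labels (and the roles of $L_1, L_2$) if necessary so that $x_1\overleftarrow{C}x_2$ is a sub-path of $P$.

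For part (2), first note that once (1) holds, the walk $C^1$ is automatically a simple cycle: $V(x_2\overrightarrow{C}x_1)=V(x_1\overleftarrow{C}x_2)\subseteq V(P)$, so any vertex in $V(L_i)\cap V(x_2\overrightarrow{C}x_1)\subseteq V(L_i)\cap V(P)$ must equal $x_i$, and the intersections with $V(F[y_1,y_2])\subseteq V(F)$ are controlled similarly using the hypothesis that each hole of $G$ meets $C$ in at most one vertex. The substance of (2) is then that we can choose $(F,L_1,L_2)$ so that the bounded plane subgraph $H$ is a near-triangulation. I would argue by a minimum counterexample: among all triples $(F,L_1,L_2)$ satisfying (1), pick one minimizing $v(H)$. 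If $H$ is a near-triangulation, we are done. Otherwise, by Lemma \ref{bounded} the graph $H$ is $2$-connected, and being not a near-triangulation it contains a hole $F'$. Apply part (1) to $(H,C^1)$ with the sub-path $P^1=x_1L_1y_1\overleftarrow{F}y_2L_2x_2$ of $C^1$, obtaining $x_1',x_2'\in V(P^1)$, $y_1',y_2'\in V(F')$, and internally vertex-disjoint paths $L_1',L_2'$ in $H$. Extend each $L_i'$ from its endpoint $x_i'\in V(P^1)$ back to $V(P)$ in $G$, by appending the appropriate sub-path of $L_1$, $L_2$, or $F[y_1,y_2]$ (according to which segment of $P^1$ contains $x_i'$). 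The resulting triple $(F',\tilde L_1,\tilde L_2)$ satisfies (1), and its associated cycle $\tilde C^1$ bounds a strictly smaller subgraph of $H$, contradicting minimality.

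The main obstacle is the case analysis in the extension step of part (2). When $x_1'$ and $x_2'$ land on the same segment of $P^1$ (both on $L_1$, both on $L_2$, or both on $F[y_1,y_2]$), the two extensions cannot both follow that segment to a single endpoint of $V(P)$; one of them must detour around $P^1$ using the opposite of $L_1,L_2$ together with $F[y_1,y_2]$. Verifying that the resulting $\tilde L_1,\tilde L_2$ are vertex-disjoint, satisfy $V(\tilde L_i)\cap V(P)=\{\tilde x_i\}$ and $V(\tilde L_i)\cap V(F')=\{\tilde y_i\}$, and genuinely yield a smaller enclosed region is the delicate step, and it is here that the hypothesis that every hole of $G$ meets $C$ in at most one vertex is essential: it prevents $F[y_1,y_2]$ from hitting $V(C)\setminus V(P)$ in a way that spoils the extension or from contributing extra vertices of $V(P)$ to $\tilde L_i$.
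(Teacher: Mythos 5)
Part (1) of your argument is essentially the paper's: the paper simply asserts the existence of two disjoint $(P,F)$-paths from $2$-connectedness and then performs exactly the trimming you describe; your Menger argument on the augmented graph is a legitimate way to justify that assertion. The problems are in part (2). First, $C^1$ is \emph{not} automatically a cycle once (1) holds. The conditions $V(L_i)\cap V(P)=\{x_i\}$ and $V(L_i)\cap V(F)=\{y_i\}$ control the intersections of $L_1,L_2$ with the other three segments, but they say nothing about the intersection of the arc $y_1\overleftarrow{F}y_2$ with the arc $x_2\overrightarrow{C}x_1$. The hypothesis only gives $|V(F)\cap V(C)|\leq 1$, and that one common vertex may be interior to \emph{both} arcs, in which case $C^1$ is a closed walk with a repeated vertex. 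The paper disposes of this case using the extremal choice: if the walk pinches at $x\in V(F)\cap V(C)$, replace $L_2$ by the trivial path at $x$ to obtain a configuration satisfying (1) whose bounded region is strictly smaller, contradicting minimality. You need some such argument; ``controlled similarly'' does not follow from (1).

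Second, and more seriously, your recursion is set up on the wrong path. Applying (1) to $(H,C^1)$ with $P^1=x_1L_1y_1\overleftarrow{F}y_2L_2x_2$ forces you to re-route the new paths $L_1',L_2'$ from their endpoints on $L_1\cup(y_1\overleftarrow{F}y_2)\cup L_2$ back to $V(P)$, which is exactly the case analysis you flag as ``the main obstacle'' and do not carry out; it is not clear that the extended paths remain disjoint, meet $P$ and $F'$ only where required, or bound a strictly smaller region. The paper avoids this entirely by applying statement (1) to $H$ with the path $P^*=x_1Px_2$, i.e.\ the portion of $C^1$ lying on the original outer cycle $C$. The new attachment vertices $x_1'',x_2''$ then already lie on $P$, the new paths $L_1'',L_2''$ satisfy (1) verbatim with respect to $P$ and the new hole $F^*$ (which is also a hole of $G$), and the region they bound is strictly smaller than $H$ --- an immediate contradiction with minimality, with no extension step at all. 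Re-running your recursion with $P^*$ in place of $P^1$ closes this gap; as written, the proposal is incomplete.
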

\begin{proof}
Choose $x_1',x_2'\in V(P)$ and $y_1',y_2'\in V(F)$, then since $G$ is $2$-connected, it follows that there exist $x_1'y_1'$-path $L_1'$  and $x_2'y_2'$-path $L_2'$ such that $L_1',L_2'$ are vertex-disjoint.
Without loss of generality, assume that $x_1'\overleftarrow{C}x_2'$ is a path within $P$.
For $i\in[2]$, along with the path $L_i'$ extending from $x_i'$ to $y_i'$, let $x_i$ be the last vertex that belongs to the path $P$; along with the path $x_iL_i'y_i'$ extending from $x_i$ to $y_i'$, let $y_i$ be the first vertex that belongs to $V(F)$.
Then $L_1=x_1L_1'y_1$ and $L_2=x_2L_2'y_2$ are desired paths, and the statement (1) holds (see Figure \ref{lem-1}).

In order to prove the statement (2), we select the hole $F$ and the paths $L_1,L_2$ in such a way that the number of edges in plane subgraph bounded by $C^1$ is as small as possible (denote the plane subgraph by $H$).
We first claim that $C^1$ is a cycle.
Otherwise, since any hole of $G$ intersects $C$ in at most one vertex, it follows that the boundary of $F$ intersect $C$ at precisely one vertex $x\in V(P^*)$, where $P^*=x_1Px_2$ is a sub-path of $P$. If we replace $L_2$ with the path consisting of the single vertex $x$, then we get a smaller plane subgraph of $G$ which is bounded by $x_1L_1y_1\overleftarrow{F}x\overrightarrow{C}x_1$, contradicting the minimality of $H$. 
Hence, $C^1$ is a cycle.
Furthermore, $H$ is a $2$-connected plane graph by Lemma \ref{bounded}.

\begin{figure}[ht]
    \centering
    \includegraphics[width=180pt]{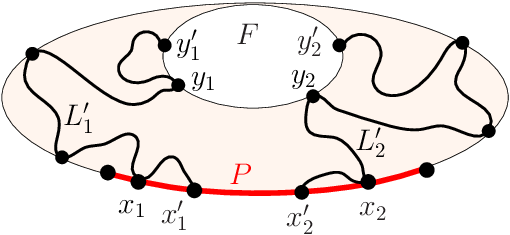}\\
    \caption{The plane subgraph of $G$ bounded by $C^1$.} \label{lem-1}
\end{figure}

If $H$ is not a near-triangulation, then we have the following result by the first statement:
there exist a hole $F^*$ of $H$ (note that $F^*$ is also a hole of $G$), two vertices $x_1'',x_2''\in V(P^*)$  with $x_1''\overleftarrow{C}x_2''$ a sub-path of $P^*$, two vertices $y_1'',y_2''\in V(F^*)$, an $x_1''y_1''$-path $L_1''$ and an $x_2''y_2''$-path $L_2''$ such that $L_1'',L_2''$ are vertex-disjoint paths with $V(L_i'')\cap V(P^*)=\{x_i''\}$ and $V(L_i'')\cap V(F^*)=\{y_i''\}$ for $i=1,2$.
Then the edges in plane subgraph  bounded by $x_1''L_1''y_1''\overleftarrow{F^*}y_2''L_2''x_2''\overrightarrow{C}x_1''$  is smaller than $H$, contradicting the minimality of $H$.
\end{proof}

For $u,v\in V(G)$, if $uv\notin E(G)$ but $u,v$ belong to the boundary of a face $F$ in $G$, it is clear that $G\cup \{uv\}$ is also a plane graph, and we call $uv$ a {\em joinable edge} of $F$.

\begin{lemma}\label{joinable}
Suppose that $G$ is a $2$-connected plane graph and $C$ is the facial cycle of the outer face of $G$. For a vertex $v\in V(C)$, if $G'=G-v$ has $r$ components, then there are at least $r-1$ joinable edges of $G$ that lie within the outer face of $G'$.
\end{lemma}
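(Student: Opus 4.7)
The plan is to exploit the cyclic order of $v$'s neighbors in the plane embedding. Let $u_1, u_2, \ldots, u_d$ be the neighbors of $v$ listed in cyclic order around $v$, chosen so that $u_1$ and $u_d$ are the two neighbors of $v$ that lie on the outer cycle $C$. For each $i \in \{1, \ldots, d-1\}$, the wedge between the consecutive edges $vu_i$ and $vu_{i+1}$ is an inner face $F_i$ of $G$ whose boundary contains both $u_i$ and $u_{i+1}$; when $v$ is deleted, all of these $F_i$'s together with the outer face of $G$ fuse into a single outer face of $G' = G - v$.

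Next, using only that $G$ is connected, we would show that every component of $G'$ must contain at least one neighbor of $v$: for any vertex $w$ in a given component, a $w$-to-$v$ path in $G$ has a penultimate vertex that lies in $G'$ and in the same component as $w$. Hence if $c(i)$ denotes the component of $G'$ containing $u_i$, the sequence $c(1), c(2), \ldots, c(d)$ attains all $r$ components, and therefore has at least $r - 1$ indices $i \in \{1, \ldots, d-1\}$ at which $c(i) \neq c(i+1)$. At any such transition index $i$, the vertices $u_i$ and $u_{i+1}$ are non-adjacent in $G$ (otherwise they would lie in a common component of $G'$) yet co-bound the face $F_i$, so $u_iu_{i+1}$ is a joinable edge of $F_i$ that lies within the outer face of $G'$ (since $F_i$ gets absorbed into that face). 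Distinct transitions yield distinct faces $F_i$ and hence distinct joinable edges, producing the required at least $r - 1$ edges.

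The proof faces no substantial obstacle. The core idea is that a surjective $r$-coloring of the linear sequence $u_1, \ldots, u_d$ necessarily has at least $r - 1$ color-changes, and each color-change automatically yields a joinable non-edge across an inner face incident to $v$. The only mild care needed is to confirm that the relevant faces $F_i$ are pairwise distinct inner faces of $G$ and that each is indeed absorbed into the outer face of $G'$ upon deletion of $v$, both of which are immediate from the plane embedding around $v$.
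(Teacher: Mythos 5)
Your argument is internally sound, but it proves the lemma only as literally worded, and as literally worded the lemma is vacuous: since $G$ is $2$-connected, $G'=G-v$ is connected, so the number of components is $r=1$ and nothing needs to be shown. The word ``components'' in the statement is evidently a slip for ``blocks.'' This is clear both from the paper's own proof, which begins with the identity $\sum_{i}(n_i-1)=r-1$ over the cut-vertices of $G'$ (an identity that counts blocks of a connected graph, not components), and from the application in Case 1.1 of the proof of Theorem \ref{thm-2ck}, where $G-v_1$ is noted to be connected and $r$ is explicitly taken to be its number of blocks. So the content you actually needed is: if the connected graph $G-v$ has $r$ blocks, then at least $r-1$ joinable edges lie in its outer face. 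You should have been alerted to this by the fact that your own hypothesis forces $r=1$.

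Your mechanism---counting the indices $i$ at which consecutive neighbours $u_i,u_{i+1}$ of $v$ change component---does not adapt to the block version, and not merely because block-membership fails to be a well-defined colouring (cut-vertices lie in several blocks). The construction itself produces too few edges. Take $G=C_5=vaw_1w_2bv$: here $G-v$ is a path with three blocks, so two joinable edges are required, but $v$ has only two neighbours, so your scheme yields at most the single candidate $ab$. The two edges the lemma promises are $aw_2$ and $w_1b$, each joining a neighbour of $v$ to a non-neighbour; they are found not by looking at the rotation around $v$ but at the rotations around the cut-vertices of $G'$. That is exactly what the paper does: for a cut-vertex $w$ of $G'$ lying in $n_w$ blocks, those blocks occur consecutively in the cyclic order at $w$, and of the $n_w$ gaps between consecutive blocks all but at most one (the gap occupied by the edge $vw$, when present) supplies a joinable edge between the outer boundaries of the two adjacent blocks, all inside the outer face of $G'$; summing $n_w-1$ over cut-vertices gives $r-1$. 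This local analysis at the cut-vertices of $G-v$ is the idea missing from your attempt.
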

\begin{proof}
Assume that $v_1,v_2,\ldots,v_s$ are all cut-vertices of $G'$, and each $v_i$ is contained in precisely $n_i$ blocks.
Then $\sum_{i\in[s]}(n_i-1)=r-1$.
For each $v_i$, let $B_1^i,B_2^i,\ldots, B_{n_i}^i$ be blocks of $G'$ containing $v_i$, and they are listed in clockwise order around $v_i$.
For each $j\in[n_i]$, assume that $v_i,u_1^j,\ldots,u_{m_j}^j$ are vertices in the boundary of the outer face of $B_j^i$, and they are listed in clockwise order.
Clearly, $v_i,u_1^j,\ldots,u_{m_j}^j$ are all distinct since $B_j^i$ is either an edge or a $2$-connected plane graph.
Then at least $n_i-1$ edges of $\{u_{m_j}^ju_1^{j+1}:j\in[n_i]\}$ are joinable edges (note that if $uv_i\in E(G)$, then there exists exactly one integer $j\in[n_i]$ such that $u_{m_j}^ju_1^{j+1}$ is not a joinable edge), and they lie within the outer face of $G'$.
Hence, there are at least $\sum_{i\in[s]}(n_i-1)=r-1$ joinable edges of $G$ that lie within the outer face of $G'$.
\end{proof}

It is clear that any near-triangulation is a circuit graph.
Hence, if $k\geq 3$ and $G$ is a near-triangulation with at least $k^{\log_23}$ vertices, then $G$ contains a cycle of length at least $C_k$.
The following result indicates that the cycle $C_k$ can also be present in  such a near-triangulation, and this result will be used in the proof of Theorem \ref{thm-2ck}.
\begin{lemma}[Shi, Walsh and Yu \cite{SWY}]\label{lem-theta-ck}
If $G$ is a near-triangulation with a cycle of length at least $k$, then $G$ contains a $C_k$.
\end{lemma}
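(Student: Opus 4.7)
The plan is to set up an induction that shortens a cycle by exactly one vertex at a time. I would take a cycle $C$ in $G$ of minimum length $\ell$ satisfying $\ell \geq k$; if $\ell = k$ we are done, so the goal is to derive a contradiction by exhibiting a shorter cycle of length $\ell - 1 \geq k$. By Lemma \ref{bounded}, the plane subgraph $H$ of $G$ bounded by $C$ is $2$-connected, and since every inner face of $G$ lying inside $C$ remains a triangle, $H$ is itself a near-triangulation with outer cycle $C$. Hence the task reduces to establishing the following sub-claim: every near-triangulation $H$ with outer cycle of length $\ell \geq 4$ contains a cycle of length $\ell - 1$.

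I would prove the sub-claim by induction on $|V(H)|$. The base case $|V(H)| = \ell$ is a triangulated polygon, whose dual tree has a leaf corresponding to an ear vertex $c_i$; then the chord $c_{i-1}c_{i+1}$ produces the cycle $c_1\ldots c_{i-1}c_{i+1}\ldots c_\ell$ of length $\ell - 1$. For the inductive step, fix an outer edge $c_1c_2$ and consider the unique inner triangular face $T = c_1c_2w$ incident to it. If $w = c_j$ lies on $C$ with $j \in \{3, \ell\}$, the chord $c_1c_3$ or $c_2c_\ell$ already shortens $C$ to a cycle of length $\ell - 1$. If $w = c_j$ with $3 < j < \ell$, the chord $c_1c_j$ splits $H$ into two strictly smaller near-triangulations with outer cycle lengths summing to $\ell + 2$; applying the inductive hypothesis to the larger piece and carefully concatenating the resulting cycle through the triangle $c_1c_2c_j$ with an arc of $C$ in the other piece yields a cycle of length exactly $\ell - 1$.

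The main obstacle is the case where $w$ is an interior vertex. The favorable situation is when some interior vertex $v$ is adjacent to three consecutive outer vertices $c_i, c_{i+1}, c_{i+2}$: then replacing $c_{i+1}$ by $v$ in $C$ gives the required cycle of length $\ell - 1$ at once. Ruling this out forces a very rigid structure around $C$: for every outer edge $c_ic_{i+1}$ the inner face's third vertex $w_i$ is a distinct interior vertex whose only $C$-neighbors are $c_i$ and $c_{i+1}$, and the triangulation around each $c_i$ then forces $w_{i-1}w_i$ to be an edge, producing a belt cycle $w_1w_2\ldots w_\ell$ strictly interior to $C$. The plane subgraph bounded by this belt cycle is itself a strictly smaller near-triangulation (with outer cycle of length $\ell$), so by the inductive hypothesis on $|V(H)|$ it contains a cycle of length $\ell - 1$, which also lies in $H$.

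The main technical step --- and the one I expect to require the most care --- is verifying that the dichotomy ``three consecutive $C$-neighbors at an interior vertex'' versus ``belt of distinct $w_i$'s bounding a nested near-triangulation'' is truly exhaustive. In particular, one must rule out the possibility that extra interior vertices between $C$ and the belt break the chain of edges $w_{i-1}w_i$, and one must verify that the nested near-triangulation inside the belt is proper (strictly fewer vertices than $H$) so that the inductive hypothesis applies cleanly.
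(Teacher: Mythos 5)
The paper does not actually prove this lemma; it is imported verbatim from \cite{SWY}, so there is no internal proof to compare against. Judged on its own terms, your reduction is fine: passing to a shortest cycle $C$ of length $\ell\geq k$, noting that the subgraph bounded by $C$ is a near-triangulation with outer cycle $C$, and reducing to the sub-claim ``a near-triangulation with outer cycle of length $\ell\geq 4$ has a cycle of length $\ell-1$'' is exactly the right strategy, and the base case (ear of a triangulated polygon) is correct. The problems are in the inductive step.

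First, the ``favorable situation'' is simply wrong as stated: if an interior vertex $v$ is adjacent to $c_i,c_{i+1},c_{i+2}$ and you replace $c_{i+1}$ by $v$, you trade one vertex for another and get a cycle of length $\ell$, not $\ell-1$. The wheel $W_\ell$ exposes this: the hub is adjacent to all of $C$, so your dichotomy routes it to the favorable case, yet the substitution gains nothing; the actual $(\ell-1)$-cycle there is $vc_1c_2\cdots c_{\ell-2}v$, which replaces an arc of length $3$ by a path of length $2$. To shorten by one you need $v$ adjacent to $c_i$ and $c_{i+3}$ (or a chord $c_ic_{i+2}$), so the whole case split is miscalibrated and the complementary ``belt'' case does not cover what it needs to. Second, in the case $w=c_j$ with $3<j<\ell$, the inductive hypothesis only produces an $(\ell'-1)$-cycle \emph{somewhere} in the larger piece, with no guarantee that it passes through $c_1$ and $c_j$ (or through the chord $c_1c_j$); without that, there is nothing to concatenate with the arc $c_j\overrightarrow{C}c_1$. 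Making this work requires a genuinely stronger induction hypothesis (e.g.\ an $(\ell-1)$-cycle through a prescribed outer edge), which you neither state nor prove. Third, the belt $w_1w_2\cdots w_\ell$ is not forced: around $c_i$ there may be further interior neighbors between $w_{i-1}$ and $w_i$, so $w_{i-1}w_i$ need not be an edge, and the $w_i$ need not be distinct; you flag this yourself, but it is not a technicality to be checked---it is the substantive case of the proof, and as written the argument does not close.
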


\section{Proof of Theorem \ref{thm-near}}

The proof of Theorem \ref{thm-near} proceeds by contradiction.
Let $\mathcal{G}$ be the set of all counterexamples with minimum number of missing edges. Then 
$\mathcal{G}\neq \emptyset$.
Below is the key lemma that will be utilized in proving Theorem \ref{thm-near}.

\begin{lemma}\label{inter-0}
For any $G\in \mathcal{G}$ and any hole $F$ of $G$, $V(F)\cap V(C)=\emptyset$, where $C$ is the outer face of $G$.
\end{lemma}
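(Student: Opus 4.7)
The plan is to argue by contradiction using the minimality of $m(G)$. Suppose some hole $F$ of $G$ satisfies $V(F)\cap V(C)\neq\emptyset$, pick $v\in V(F)\cap V(C)$, and let $u_1,u_2$ be the two neighbors of $v$ on $\partial F$, so $|F|\ge 4$. The main case is $u_1u_2\notin E(G)$ and $|F|\ge 5$; the degenerate situations are treated by short variants --- if $F$ and $C$ share an edge $e$, then removing $e$ already decreases $m$ by $|F|-3\ge 1$ and yields a $2$-connected plane subgraph from which the required near-triangulation can be extracted; if $|F|=4$, one uses the chord $vw$ where $w$ is the vertex of $\partial F$ opposite to $v$; and if $u_1u_2\in E(G)$, the triangle $vu_1u_2$ is already present in $G$, simplifying the argument.

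Set $G^+:=G+u_1u_2$ with $u_1u_2$ drawn inside $F$. Then $G^+$ is a $2$-connected plane graph with $v(G^+)=v(G)\ge t$ and $m(G^+)=m(G)-1<(v(G^+)-(t-1))/(3t-7)$. By the minimality of $G\in\mathcal{G}$, $G^+$ is not a counterexample, so it contains a near-triangulation subgraph $T$ with $v(T)\ge t$. If $u_1u_2\notin E(T)$ then $T\subseteq G$ contradicts that $G$ itself is a counterexample, so from now on $u_1u_2\in E(T)$.

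The central structural claim is that $u_1u_2$ lies on the outer cycle $C_T$ of $T$. Consider the face of $T$ adjacent to $u_1u_2$ on its $(|F|-1)$-face side in $G^+$: if it were an inner $3$-face $(u_1,u_2,z)$ of $T$, then the edges $zu_1$ and $zu_2$ would have to be drawn strictly inside the region of $F$ in $G$. But $F$ is chord-free in $G$, so the only edges at $u_1$ (respectively $u_2$) drawn inside $F$ are the two edges of $\partial F$ incident to $u_1$ (respectively $u_2$), and for $|F|\ge 5$ these two two-element sets share no common vertex, ruling out such a $z$. Hence this face must be the outer face of $T$, and $u_1u_2\in E(C_T)$. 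A short $2$-connectivity argument exploiting $v\in V(C)$ next shows $v\in V(T)$: otherwise $v$'s position would lie in the outer face of $T$ on the $v$-side of $u_1u_2$, forcing both sides of $u_1u_2$ to belong to the outer face of $T$, which is impossible since $u_1u_2$ is not a bridge. Consequently $v\in V(C_T)$ as well.

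Finally, delete $u_1u_2$ from $T$ and replace it on the outer walk by the path $u_1vu_2$, whose edges $vu_1,vu_2\in E(F)\subseteq E(G)$ are available; all inner faces of $T$ other than the triangle on the $v$-side of $u_1u_2$ remain $3$-faces, and the resulting graph $T^*\subseteq G$ has at least $t$ vertices. If $v$ appears only once on the new outer walk, then $T^*$ is a near-triangulation of $G$ of order at least $t$, contradicting the choice of $G\in\mathcal{G}$. The main obstacle is the remaining case, in which $v\in V(C_T)$ forces $v$ to appear twice on the new outer walk, producing a figure-$8$ at $v$ that splits $T^*$ into two blocks $B_1,B_2$ meeting only at $v$; I expect to resolve this by choosing $v$ among $V(F)\cap V(C)$ so that the split is maximally unbalanced (for example, by picking $v$ near one end of a maximal subpath of $\partial F$ lying on $C$, or by applying Lemma~\ref{joinable} to $G-v$ to control the block structure), forcing one block of $T^*$ to contain at least $t$ vertices and yielding the desired contradiction.
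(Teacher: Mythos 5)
Your approach is genuinely different from the paper's (which splits into the case $|V(F)\cap V(C)|\geq 2$, handled by a decomposition of $G$ along $C\cup F$ and an averaging argument over the pieces, and the much harder case $|V(F)\cap V(C)|=1$, handled via an extremal choice of a ``peripheral neighborhood''). Your idea --- add the chord $u_1u_2$ inside $F$ to drop $m$ by one, extract a near-triangulation $T$ from $G+u_1u_2$ by minimality, and lift $T$ back into $G$ --- is attractive, and the first step is sound. But there are two genuine gaps. First, the structural claim that $u_1u_2$ lies on the outer cycle of $T$ is not established by your argument. A $3$-face of the \emph{subgraph} $T$ need not be a face of $G^+$: if the face of $T$ on the $(|F|-1)$-side of $u_1u_2$ is an inner $3$-face $u_1u_2z$, the edges $zu_1,zu_2$ need not be drawn inside $F$; they only need to bound a triangle whose interior contains the $(|F|-1)$-face of $G^+$, and the internal vertices of the path $V(F)\setminus\{v\}$ can lie strictly inside that triangle without belonging to $T$ at all. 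So the chord-freeness of $F$ does not rule out such a $z$, and $u_1u_2$ may be an interior edge of $T$ (with $vu_1u_2$ on one side and $zu_1u_2$ on the other); deleting it then leaves a $4$-face $vu_1zu_2$ and the lift-back produces a hole, not a near-triangulation.

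Second, and more seriously, the case you defer at the end --- $v$ already on the outer cycle of $T$, so that replacing $u_1u_2$ by $u_1vu_2$ creates a cut vertex at $v$ and splits $T^*$ into two blocks --- is precisely the hard part of the lemma, and ``I expect to resolve this by choosing $v$ so that the split is maximally unbalanced'' is not a proof. There is no a priori reason either block has at least $t$ vertices, and no local choice of $v$ on $V(F)\cap V(C)$ obviously controls the split, because the sizes of the two blocks are determined by the global structure of $T$, not by where $v$ sits on the boundary of $F$. This is exactly the difficulty the paper's Case 2 is built to overcome: it requires choosing, over all of $\mathcal{G}$, a graph, a vertex $v$, and a peripheral neighborhood maximizing the largest block of $G-\{vu_i:i\in[a]\}$, and then proving (Lemmas \ref{clm-1}--\ref{near-tri}) that the block-tree is a path and every block except the extremal one is a small near-triangulation. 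Deferring this case is deferring the substance of the proof, so as written the proposal does not establish the lemma.
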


We first prove Theorem \ref{thm-near}, while for Lemma \ref{inter-0}, its detailed proof will be given later.\bigskip

\noindent{\bf Proof of Theorem \ref{thm-near}:}
Choose a $2$-connected plane graph $G\in\mathcal{G}$ and assume that the facial cycle of its outer face is $C$.
By Lemmas \ref{lem-main-1} and \ref{inter-0}, there exist two vertices $x_1,x_2$ of $C$, a hole $F$ of $G$, two vertices $y_1,y_2$ of $F$, an $x_1y_1$-path $L_1$ and an $x_2y_2$-path $L_2$ in $G$ such that
$V(L_i)\cap V(C)=\{x_i\}$ for $i\in[2]$, and
the plane subgraph bounded by $x_1L_1y_1\overleftarrow{F}y_2L_2x_2\overrightarrow{C}x_1$, denoted by $D$, is a near-triangulation.
Since there is no hole intersecting $C$ by Lemma \ref{inter-0}, it is easy to verify that $x_1L_1y_1\overrightarrow{F}y_2L_2x_2\overleftarrow{C}x_1$ is a cycle, and hence the plane subgraph bounded by $x_1L_1y_1\overrightarrow{F}y_2L_2x_2\overleftarrow{C}x_1$, denoted by $H$, is $2$-connected.
It is clear that $m(H)<m(G)$ and $|V(H)\cap V(D)|\geq 4$.

If $|D|\geq t$, then $D$ is a near-triangulation of order at least $t$ in $G$, a contradiction.
If $|D|\leq t-1$ and $|H|\leq t-1$, then $n\leq |D|+|H|-4\leq 2t-6$, and hence $1\leq m(G)<\frac{n-(t-1)}{3t-7}\leq \frac{t-5}{3t-7}<0$, a contradiction.
If $|D|\leq t-1$ and $|H|\geq t$, then $|H|\geq n-(t-5)$.
Since $H$ is 2-connected and
$$m(H)\leq m(G)-1<\frac{n-(t-1)}{3t-7}-1=\frac{n-4t+8}{3t-7}\leq \frac{|H|-(3t-3)}{3t-7}<\frac{|H|-(t-1)}{3t-7},$$
contradicting the choice of $G$.
\hfill$\square$ 

\bigskip
Now we ready to prove Lemma \ref{inter-0}.\bigskip

\noindent{\bf Proof of Lemma \ref{inter-0}:} Suppose to the contrary that there exists $G\in \mathcal{G}$ such that for some hole $F$ of $G$, $|V(F)\cap V(C)|\geq 1$.
For the case $|V(F)\cap V(C)|\geq 2$, the proof is almost the same as in \cite{SWY}, while for the case $|V(F)\cap V(C)|=1$, a more detailed discussion is required.

\setcounter{case}{0}
\begin{case}\label{case-1}
$|V(F)\cap V(C)|\geq 2$ for some hole $F$ of $G$.
\end{case}

Assume that $F\cap C=\{x_1,\ldots,x_k\}$ for some $k\geq 2$ and $x_1,\ldots,x_k$ are listed in clockwise order on $C$.
Then $|F|\geq k$.
Let $D_i$ be the plane subgraph of $G$ bounded by $x_i\overrightarrow{C}x_{i+1}\overleftarrow{F}x_i$, where the index $k+1$ is considered equivalent to $1$.
It is clear that each $D_i$ is either an edge or a $2$-connected plane graph.
If $|D_i|\leq t-1$ for each $i\in[k]$, then $n=|G|\leq k(t-1)-k$ and
$$m(G)<\frac{n-(t-1)}{3t-7}\leq \frac{kt-2k-t+1}{3t-7}.$$
If $k\leq 4$, then $m(G)=0$ since $t\geq 4$, a contradiction.
If $k\geq 5$, then $m(G)\geq |F|-3\geq k-3$ and hence $(k-3)(3t-7)< kt-2k-t+1$.
This implies that
\begin{align}\label{ineq-cap-2}
t<\frac{5k-20}{2k-8}=2+\frac{k-4}{2k-8}< 3,
\end{align}
contradicting $t\geq 4$.
Hence, there is a $D_i$ such that $|D_i|\geq t$.

Let $I=\{i\in [k]:|D_i|\geq t\}$.
Assume that $s\in I$ is an integer with 
$$\frac{|D_s|-(t-1)}{m(D_s)}=\max\left\{\frac{|D_i|-(t-1)}{m(D_i)}:i\in I\right\}.$$
Then
\begin{align*}
\frac{|D_s|-(t-1)}{m(D_s)}&\geq \frac{\sum_{i\in I}[|D_i|-(t-1)]}{\sum_{i\in I}m(D_i)}\\
&\geq\frac{n+k-(k-|I|)(t-1)-|I|(t-1)}{m(G)-|F|+3}\\
&=\frac{n-k(t-2)}{m(G)-|F|+3}.
\end{align*}
Since  $(3t-7)m(G)< n-(t-1)$, it follows that
\begin{align*}
\frac{n-k(t-2)}{m(G)-|F|+3}-\frac{n-(t-1)}{m(G)}&\geq 
\frac{(t-kt+2k-1)m(G)+(|F|-3)[m(G)(3t-7)]}{(m(G)-|F|+3)m(G)}.
\end{align*}
If $k\geq 4$, then $|F|-3\geq k-3$ and
\begin{align*}
\frac{n-k(t-2)}{m(G)-|F|+3}-\frac{n-(t-1)}{m(G)}&\geq\frac{m(G)(2kt-5k-8t+20)}{(m(G)-|F|+3)m(G)}.
\end{align*}
Since $t\geq 4$, it follows that $2kt-5k-8t+20\geq 0$.
If $k\leq 3$, then $|F|-3\geq 1$ and
\begin{align*}
\frac{n-k(t-2)}{m(G)-|F|+3}-\frac{n-(t-1)}{m(G)}&\geq \frac{m(G)(4t-kt+2k-8)}{(m(G)-|F|+3)m(G)}.
\end{align*} 
Since $t\geq 4$, it follows that $4t-kt+2k-8\geq 0$.
Consequently, both cases yield that
$$\frac{n-k(t-2)}{m(G)-|F|+3}-\frac{n-(t-1)}{m(G)}\geq 0,$$ 
which implies
$$\frac{|D_s|-(t-1)}{m(D_s)}\geq \frac{n-(t-1)}{m(G)}>3t-7.$$
However, $m(D_s)\leq m(G)-(|F|-3)<m(G)$ and $|D_s|\geq t$, contradicting the choice of $G$.

\begin{case}\label{case-2}
Every hole of $G$ that intersects with $C$ interests $C$ at exactly one vertex.
\end{case}

For a vertex $v\in C$, if $F_1,F_2,\ldots,F_\ell$ are all holes such that $F_i\cap C=v$ and they are listed in counter-clockwise order around $v$ (see Figure \ref{holes}).
Then there are vertices
$$x_1,x_2,\cdots,x_a,y_b,y_{b-1},\ldots,y_1$$
listed in counter clockwise order around $v$ such that $x_1v,y_1v\in E(C)$, $vx_a\in E(F_1)$, $vy_b\in E(F_\ell)$, and $vx_ix_{i+1}v,vy_jy_{j+1}v$ are $3$-faces for each $i\in[a-1]$ and $j\in[b-1]$.
Since $|V(F_i)\cap V(C)|=1$ for each $i\in[\ell]$, it follows that $a,b\geq 2$.
We call $\{x_1,\ldots,x_a\}$ or $\{y_1,\ldots,y_b\}$ a {\em peripheral neighborhood} of $v$.
It is clear that if a hole intersects $C$ at a vertex $v$, then $v$ has precisely two peripheral neighborhoods.

\begin{figure}[ht]
    \centering
    \includegraphics[width=180pt]{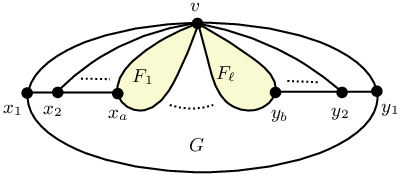}\\
    \caption{Peripheral neighborhoods  of $v$.} \label{holes}
\end{figure}

Among all graphs in $\mathcal{G}$, we choose a plane graph $G$, a vertex $v$ in the facial cycle $C$ of the outer face and a peripheral neighborhood $U=\{u_1,u_2,\ldots,u_a\}$ of $v$ such that 
\begin{enumerate}
  \item [($\star$)] the maximum block in $G'=G-\{vu_i:i\in[a]\}$ is as large as possible.
\end{enumerate}
Without loss of generality, assume that $u_1,u_2,\ldots,u_a$  are listed in counter clockwise order around $v$, where $vu_1\in E(C)$ and $vu_a$ belongs to the facial cycle of a hole $F$ with $V(C)\cap V(F)=\{v\}$.
Note that $s\geq a$.
If $G'$ is 2-connected, then $v(G')=v(G)\geq t$, $m(G')=m(G)-|F|+3< m(G)$ and $G'$ does not contain a near-triangulation of order at least $t$, which contradicts the choice of $G$.
Hence, $G'$ has $r\geq 2$ components $B_1,\ldots,B_r$ (without loss of generality, assume that $B_1,\ldots,B_r$ are listed in counter clockwise order around $v$).
Let $C_i$ denote the boundary of the outer face of $B_i$ for each $i\in[r]$.
Then $C_i$ is a cycle if $|B_i|\geq 3$.
Note that the boundary of the outer face of $G'$ is $C'=v\overrightarrow{C}u_1u_2\ldots u_a\overleftarrow{F}v$, and $C'$ is not a cycle since $G'$ is not $2$-connected.
Let $P_1=v\overrightarrow{C}u_1$ and $P_2=u_1u_2\ldots u_a\overleftarrow{F}v$.
Then $P_1=C'\cap C$ is a path.

\begin{figure}[ht]
    \centering
    \includegraphics[width=300pt]{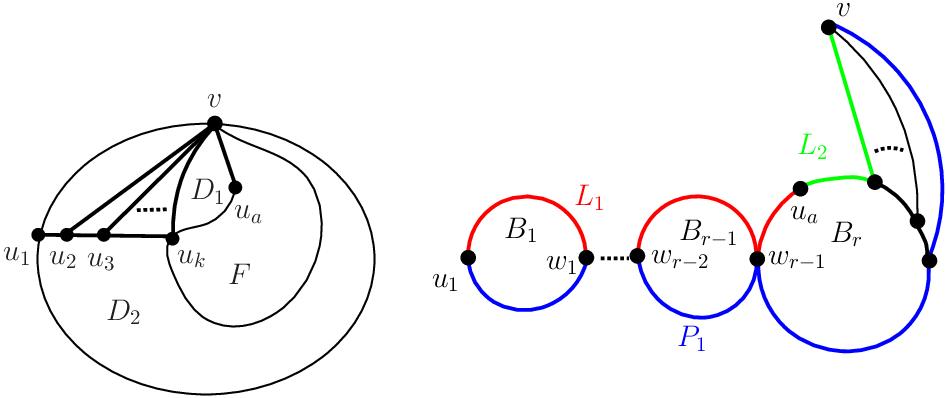}\\
    \caption{Subgraphs $D_1$ and $D_2$ (left side) Three paths $P_1,L_1$ and $L_2$ (right side).} \label{pll}
\end{figure}

\begin{lemma}\label{clm-1}
$P_2$ is a path.
\end{lemma}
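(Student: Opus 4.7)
The approach is to verify that the two segments of $P_2 = u_1 u_2 \ldots u_a \overleftarrow{F} v$ are internally vertex-disjoint. The segment $u_1 u_2 \ldots u_a$ has distinct vertices by construction of the peripheral neighborhood, and $u_a \overleftarrow{F} v$ is a simple path because $G$ being $2$-connected forces the boundary of the hole $F$ to be a simple cycle. Writing this cycle as $v, u_a, z_1, z_2, \ldots, z_k, y_b, v$, the claim reduces to showing $u_i \notin V(F)$ for every $1 \le i \le a - 1$. The case $i = 1$ is immediate, since $u_1 \in V(C)$ and $V(F) \cap V(C) = \{v\}$ force $u_1 \notin V(F)$.

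For $2 \le i \le a - 1$, I plan to derive a contradiction from the assumption $u_i \in V(F)$. Among all such $i$, pick one so that, writing $u_i = z_j$, the index $j$ is as small as possible; this forces $z_1, \ldots, z_{j-1}$ to contain no fan vertex, so the fan path $u_i u_{i+1} \cdots u_a$ and the $F$-arc $u_a z_1 \cdots z_{j-1} u_i$ are internally vertex-disjoint, and their union is a simple cycle $\gamma$ in $G$. By Lemma \ref{bounded}, the plane subgraph $H$ bounded by $\gamma$ is $2$-connected. The critical step is showing $v \in V(H)$: in the cyclic edge-order at $u_a$, the edges $u_a u_{a-1}, u_a v, u_a z_1$ are consecutive, because the $3$-face $v u_{a-1} u_a v$ lies between the first two and the hole $F$ lies between the last two. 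Since $\gamma$ uses only $u_a u_{a-1}$ and $u_a z_1$ at $u_a$, the edge $u_a v$ departs $u_a$ into the bounded side of $\gamma$, placing $v$ inside $\gamma$.

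Because $v u_\ell \in E(G)$ for every $\ell \in [a]$ and edges of $G$ cannot cross $\gamma$, the vertices $u_1, u_2, \ldots, u_{i-1}$ then all lie in $V(H)$. Propagating this observation along successive edges of $C$ shows $V(C) \subseteq V(H)$, so $C$ together with all of its edges lies in the bounded side of $\gamma$. This is the contradiction: the outer face of $G$ is unbounded and must therefore lie in the unbounded side of $\gamma$, yet its boundary $C$ has just been confined to the bounded side; since no edge of $C$ is an edge of $\gamma$, the outer face cannot meet $C$ across $\gamma$. Hence $u_i \notin V(F)$ for all $2 \le i \le a - 1$, and $P_2$ is a path. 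I expect the main technical obstacle to be the cyclic-order analysis at $u_a$ that puts $v$ inside $\gamma$; once that is in hand, the propagation along $C$ follows routinely from planarity.
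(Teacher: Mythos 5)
Your argument breaks at its pivotal step, and the overall strategy cannot be repaired. The claim that $v$ lies on the bounded side of $\gamma$ is false. The edge $vu_1$ belongs to $C$, hence lies on the boundary of the outer (unbounded) face of $G$; that face is contained in the unbounded complementary region of the cycle $\gamma$, and since neither $v$ nor $u_1$ is a vertex of $\gamma$ and distinct edges of a plane graph meet only at common endpoints, the whole closed edge $vu_1$ --- in particular $v$ --- lies in the exterior of $\gamma$. Your rotation analysis at $u_a$ only shows that the triangle $vu_{a-1}u_a$ and the hole $F$ lie on the same side of $\gamma$ (the side into which $u_av$ departs); it does not identify that side as the bounded one, and by the observation above it is in fact the unbounded one. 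With $v$ outside $\gamma$, the propagation step and the final contradiction both evaporate.

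More fundamentally, no purely topological argument can prove this lemma, because the statement is false for general $2$-connected plane graphs satisfying the Case \ref{case-2} hypotheses: it genuinely depends on $G$ being a counterexample with the minimum number of missing edges, a hypothesis your proof never uses. For instance, take vertices $v,u_1,u_2,u_3,z,z',w,p$ with outer cycle $vu_1pwv$, triangular inner faces $vu_1u_2$, $vu_2u_3$, $u_2u_3z$, $vz'w$, $pwz'$, $pz'u_2$, $pu_2u_1$, and a single $5$-face $F$ with facial cycle $vu_3zu_2z'v$. This is a $2$-connected plane graph whose only hole meets $C$ exactly in $v$, the relevant peripheral neighborhood of $v$ is $\{u_1,u_2,u_3\}$, and yet $u_2\in V(F)$, so $P_2=u_1u_2u_3zu_2z'v$ repeats $u_2$ and is not a path. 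The paper instead argues that if some $u_k$ with $k<a$ lay on $V(F)$, then the chord $vu_k$ of the facial cycle of $F$ would split $G$ into two $2$-connected plane subgraphs $D_1,D_2$ with $m(D_1)+m(D_2)\leq m(G)-1$, and a short counting argument then contradicts the extremal choice of $G\in\mathcal{G}$. Any correct proof must invoke that minimality.
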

\begin{proof}
Let $L_1=u_1\ldots u_a$ and $L_2=u_a\overleftarrow{F}v$.
It is clear that $L_1,L_2$ are paths.
Suppose to the contrary that $P_2=L_1\cup L_2$ is not a path.
Then there is a vertex $u_k$ with $k<a$ such that $u_k\in V(F)$ (choose $k$ such that it is as small as possible).
Since $V(F)\cap V(C)=\{v\}$, it follows that $k\geq 2$.
Note that $C''=vu_k\overrightarrow{F}v$ is a cycle.
Then the plane subgraph bounded by $C''$, denoted by $D_1$, is $2$-connected.
Let $D_2=G-(V(D_1)-\{v,u_k\})$.
It is obvious that $D_2$ is also a $2$-connected plane graph, $vu_k\in E(D_2)$ and $F'=vu_k\overleftarrow{F}v$ is a face of $D_2$ with $|F'|\leq |F|-1$.
Hence, 
$$m(D_1)+m(D_2)\leq m(G)-1.$$
If $|D_1|,|D_2|\leq t-1$, then $\frac{n-(t-1)}{3t-7}<1\leq m(G)$, a contradiction.
If $|D_1|<t$ and $|D_2|\geq t$ (resp. $|D_2|<t$ and $|D_1|\geq t$), then 
$$\frac{|D_2|-(t-1)}{3t-7}\geq \frac{n-2(t-1)+2}{3t-7}\geq \frac{n-(t-1)}{3t-7}-1>m(G)-1\geq m(D_2)$$ 
(resp. $\frac{|D_1|-(t-1)}{3t-7}>m(G)-1\geq m(D_1)$), contradicting the choice of $G$.
Now, assume that $|D_1|,|D_2|\geq t$.
Since $G$ does not contain near-triangulation of order at least $t$, it follows that $m(D_1),m(D_2)\geq 1$.
By the choice of $G$, we have that 
$$ \frac{|D_1|-(t-1)}{3t-7}=m(D_1)-\epsilon_1\mbox{ and }\frac{|D_2|-(t-1)}{3t-7}=m(D_2)-\epsilon_2$$
for some nonnegative real numbers $\epsilon_1$ and $\epsilon_2$.
On the other hand, since 
\begin{align*}
m(D_1)+m(D_2)-(\epsilon_1+\epsilon_2)&=\frac{|D_1|-(t-1)}{3t-7}+\frac{|D_2|-(t-1)}{3t-7}\\
&=\frac{n-(t-1)}{3t-7}-\frac{t-3}{3t-7}>m(G)-\frac{t-3}{3t-7}\\
&\geq m(D_1)+m(D_2)+1-\frac{t-3}{3t-7}
\end{align*}
Hence, $-(\epsilon_1+\epsilon_2)>1-\frac{t-3}{3t-7}>0$, which contradicts that $\epsilon_1$ and $\epsilon_2$ are nonnegative real numbers.
\end{proof}

Since $P_1,P_2$ are paths that intersect only at endpoints, and $P_1\cup P_2$ forms the boundary of the outer face of $G'$, the following result is obviously.
\begin{lemma}\label{clm-2}
The block-tree of $G'$ is a path, and each cut-vertex of $G'$ belongs to $V(P_1)\cap V(P_2)$.
\end{lemma}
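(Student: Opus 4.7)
The plan is to read the block structure of $G'$ off the outer-face boundary walk $C'=P_1\cdot P_2$. First I would show that every inner face of $G'$ is inherited from $G$: the only faces of $G$ affected by removing the edges $vu_1,\ldots,vu_a$ are the outer face of $G$, the triangular faces $vu_iu_{i+1}$ for $i\in[a-1]$, and the hole $F$, and these coalesce into the new outer face of $G'$. Every surviving inner face of $G'$ is therefore bounded by a cycle in the $2$-connected graph $G$, so by a standard face-tracing argument a vertex of $G'$ is a cut-vertex if and only if it appears at least twice in $C'$.

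By Lemma \ref{clm-1}, both $P_1$ and $P_2$ are paths, so the only vertices that can repeat in $C'=P_1\cdot P_2$ lie in $V(P_1)\cap V(P_2)$; the transition vertices $v$ and $u_1$ are each visited exactly once in the closed walk $C'$. This already yields the second assertion of the lemma: every cut-vertex of $G'$ belongs to $V(P_1)\cap V(P_2)$. Enumerating the shared vertices in the order they appear along $P_1$ as $z_0=v,z_1,\ldots,z_{k-1},z_k=u_1$, the crucial planarity observation is that they appear in the reverse order $z_k,z_{k-1},\ldots,z_0$ along $P_2$; otherwise the two subpaths would force a self-crossing in the boundary walk of a face, which is impossible for a plane embedding.

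With the reverse ordering in hand, $C'$ decomposes into $k$ simple cycles $C'_i$, each obtained by concatenating the $P_1$-arc from $z_{i-1}$ to $z_i$ with the $P_2$-arc from $z_i$ back to $z_{i-1}$. Let $R_i$ denote the plane subgraph of $G'$ bounded by $C'_i$. For $i\geq 2$ the vertex $v$ lies strictly outside $R_i$, so no removed edge $vu_j$ sits inside $R_i$, and Lemma \ref{bounded} applied to $C'_i$ as a cycle in the $2$-connected graph $G$ forces $R_i$ to be $2$-connected. For $i=1$ the cycle $C'_1$ meets $v$ only through its two $y$-side neighbors $y_1$ and $y_b$, so by the angular order $u_1,\ldots,u_a,y_b,\ldots,y_1$ at $v$ all removed $u$-edges still lie outside $R_1$, and Lemma \ref{bounded} again gives that $R_1$ is $2$-connected. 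Thus $R_1,\ldots,R_k$ are precisely the blocks of $G'$, meeting pairwise only at $z_1,\ldots,z_{k-1}$, which yields the path block-tree $R_1-z_1-R_2-\cdots-z_{k-1}-R_k$.

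The main obstacle I expect is the planarity step that pins down the reverse order of the shared vertices along $P_2$: while the picture is visually clear, turning ``the face boundary walk does not self-cross'' into a clean combinatorial statement about interlacing subpaths is the delicate point. Once this is settled, the rest reduces to applications of Lemma \ref{bounded} and a careful unwinding of the angular order of $v$'s neighbors already recorded in the setup.
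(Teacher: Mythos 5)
Your proof is correct and follows essentially the same route the paper takes: the paper asserts the lemma as an immediate consequence of $P_1$ and $P_2$ being paths whose concatenation bounds the outer face of $G'$, and your argument (all inner faces of $G'$ are inherited cycles of $G$, so cut-vertices are exactly the vertices repeated on the outer walk $C'$, which must lie in $V(P_1)\cap V(P_2)$, and the non-crossing order of these shared vertices linearly orders the blocks) is precisely the elaboration of that observation. The only nit is that a piece $R_i$ may degenerate to a single edge traversed twice by $C'$ (e.g.\ when some $u_ju_{j+1}$ lies on $C$), in which case $C_i'$ is not a cycle and Lemma \ref{bounded} does not apply, but such an $R_i$ is still a block (a bridge) and the path structure of the block-tree is unaffected.
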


We claim that each cut-vertex belongs to $\{u_1,\ldots,u_{a-1}\}$.
Clearly, for each cut-vertex $w$ of $G'$, we have that $w\in V(P_2)$ and $w\neq v$ since  $G$ is $2$-connected.
If $w\in V(F-v)$, then $v,w\in F\cap C$, a contradiction.
Hence, $w\in\{u_1,\ldots,u_{a-1}\}$.

Assume that $\{u_{n_j}:j\in[r-1]\}$ is the set of all cut-vertices of $G'$, where $n_1<\ldots<n_{r-1}$.
For convenience, for each $j\in[r-1]$, we use $w_j$ to denote the vertex $u_{n_j}$.
Then $\{w_j\}=V(B_j)\cap V(B_{j+1})$.
Since the block-tree of $G'$ is a path and $B_1,\ldots,B_r$ are listed in counter clockwise order, it follows that $B_1,B_r$ are leaf-blocks of $G'$.
Moreover, the boundary of the outer face of $B_r$ is $C_r=w_{r-1}P_2vP_1w_{r-1}$, and $w_{r-1}\neq u_a$.

\begin{lemma} \label{clm-two-statements}
The following two statements hold.
\begin{enumerate}
  \item $B_r$ is the unique maximum block in $G'$.
  \item For each $s\in[r-1]$, any hole within $B_s$ does not contain vertices in  $w_s\overrightarrow{C}w_{s-1}$.
\end{enumerate}
\end{lemma}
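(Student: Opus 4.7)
The plan is to prove both statements by contradiction, leveraging the extremal choice $(\star)$ and the minimality of $m(G)$ among counterexamples.

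\textbf{Approach for Statement (1).} I would suppose that some block $B_k$ with $k<r$ satisfies $|B_k|\geq |B_r|$. The crucial observation to record first is that the outer face cycle $C_r=w_{r-1}P_2vP_1w_{r-1}$ of $B_r$ traces the entire arc $P_1=v\overrightarrow{C}u_1$ (recall $P_1$ is the long way around $C$ from $v$ to $u_1$, since $vu_1$ has been deleted), so every vertex of the original outer cycle $C$ of $G$ lies in $V(B_r)$. Starting from this, I would exhibit a triple $(\tilde G,\tilde v,\tilde U)$ whose maximum block strictly exceeds $|B_r|$, contradicting $(\star)$. The natural candidate is the \emph{other} peripheral neighborhood of $v$, namely $U^{\star}=\{y_1,\ldots,y_b\}$ associated with the hole $F_\ell$ on the counter-clockwise side of $v$, and $\tilde G=G-\{vy_j:j\in[b]\}$. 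Because the edges $vu_i$ are retained in $\tilde G$, the vertex $v$ remains directly adjacent to every $u_i$, so the cut-vertices $w_1,\ldots,w_{r-1}$ of $G'$ cease to be cut-vertices of $\tilde G$. The ``last'' block $\tilde B_{\tilde r}$ of $\tilde G$ then swallows $V(B_r)$ together with $V(B_k)$, giving $|\tilde B_{\tilde r}|>|B_r|$.

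\textbf{Approach for Statement (2).} I would suppose that, for some $s\in[r-1]$, a hole $F^{\star}$ inside $B_s$ has a vertex $x$ on the arc $w_s\overrightarrow{C}w_{s-1}$ of the outer face of $B_s$ (that is, the arc of $C_s$ not lying on the outer walk $C'$ of $G'$). Since $x\in V(F^{\star})\cap V(C_s)$, I would concatenate a sub-arc of $\partial F^{\star}$ through $x$ with a sub-arc of $C_s$ through $x$ to form a cycle $C^{\star}$ in $G$, and set $D_1$ to be the plane subgraph of $G$ bounded by $C^{\star}$ and $D_2:=G-(V(D_1)\setminus V(C^{\star}))$. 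By Lemma~\ref{bounded} both $D_1$ and $D_2$ are $2$-connected plane graphs, and $m(D_1)+m(D_2)\leq m(G)-1$ because $F^{\star}$ is replaced in the two pieces by faces whose total perimeter is strictly smaller. The case analysis on $|D_1|$ and $|D_2|$ compared with $t$ then runs exactly as in the proof of Lemma~\ref{clm-1}, and in every case one obtains a contradiction with either the hypothesis $m(G)<\frac{n-(t-1)}{3t-7}$ or the minimality of $m(G)$ within $\mathcal G$.

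\textbf{Expected difficulty.} The main obstacle will be Statement (1): verifying that $\tilde B_{\tilde r}$ is \emph{strictly} larger than $B_r$, rather than only as large, requires care. In particular one must handle the degenerate case $F_\ell=F$ (both peripheral neighborhoods point to the same hole), and rule out the possibility that new cut-vertices appearing in $\tilde G$ fragment the hoped-for large block along the $u$-side. A smaller technical point in Statement (2) is the boundary case $s=1$, where $w_0$ has not been defined and must be interpreted appropriately (for example as $v$, or as the suitable endpoint of $C_1$ on $P_1$) so that the splitting cycle $C^{\star}$ is well defined.
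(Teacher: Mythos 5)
Your approach to Statement (1) is essentially the paper's: the paper also takes the other peripheral neighborhood $W$ of $v$, notes $W\subseteq V(B_r)$, and observes that in $G-\{vx:x\in W\}$ all edges $vu_i$ survive, so $B_i$ and $v$ lie in a common block of size at least $|B_i|+1>|B_r|$, contradicting $(\star)$. (You do not actually need the stronger claim that the new block swallows $V(B_r)\cup V(B_k)$; containing $B_k\cup\{v\}$ already beats the old maximum, which sidesteps the ``strictness'' worry you raise.)

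Statement (2) is where your proposal has a genuine gap. You want to build a separating cycle $C^{\star}$ by concatenating an arc of $\partial F^{\star}$ with an arc of $C_s$ ``through $x$,'' but two arcs can only close up into a cycle if they share \emph{two} vertices of $V(F^{\star})\cap V(C_s)$, and nothing guarantees a second such vertex: under the Case 2 hypothesis $F^{\star}$ meets $C$ only at $x$, and it may meet $C_s$ nowhere else. Even when two intersection points exist, the inequality $m(D_1)+m(D_2)\leq m(G)-1$ does not follow from ``the hole is replaced by faces of smaller total perimeter'': in Lemma \ref{clm-1} the strict drop comes from splitting $F$ along a \emph{single edge} $vu_k$ joining two of its boundary vertices (total boundary length $|F|+2$, hence total deficiency $|F|-4$), and no analogous edge is available at $x$. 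If the splitting path is longer, or if no hole is actually split, one gets only $m(D_1)+m(D_2)\leq m(G)$, and then the $\epsilon_1+\epsilon_2$ computation yields $\epsilon_1+\epsilon_2<\frac{t-3}{3t-7}$, which is not a contradiction. The paper's proof of (2) instead reuses the extremal condition $(\star)$, which your argument never invokes: from the vertex $w=x$ where $F^{\star}$ meets $C$ it takes the peripheral neighborhood $U^{*}\subseteq V(B_s)$ of $w$, verifies that the edge $w_su_{n_s-1}$ is not among the deleted edges $\{wx:x\in U^{*}\}$, and then exhibits $G\bigl[\bigcup_{s<i\leq r}V(B_i)\bigr]\cup\{vu_{n_s-1},w_su_{n_s-1}\}$ as a $2$-connected subgraph of $G-\{wx:x\in U^{*}\}$ with more than $|B_r|$ vertices, contradicting $(\star)$. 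You would need to switch to an argument of this type (or otherwise supply the missing second intersection point and the strict decrease of $m$) for Statement (2) to go through.
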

\begin{proof}
For the first statement, suppose to the contrary that $B_i$ is a maximum component for some $i\in[r-1]$. 
Note that $v$ has precisely two peripheral neighborhoods, and assume that another peripheral neighborhood of $v$ is $W$.
Then $W\subseteq V(B_r)$, and hence one block of $G-\{vx:x\in W\}$ contains $B_i$ and $v$. 
Hence, we find a larger block, contradicting the condition ($\star$).

For the second statement, suppose to the contrary that there is a hole $F^*$ within $B_s$ and a vertex $w\in V(w_s\overrightarrow{C}w_{s-1})$ such that $V(F^*)\cap V(w_s\overrightarrow{C}w_{s-1})=w$.
Then there is peripheral neighborhood of $w$ in $G$, say $U^*$, such that $U^*\subseteq V(B_s)$.
We claim that $w_su_{n_s-1}\notin \{xw:x\in U^*\}$.
Clearly, if $w\neq w_s$, then $w_su_{n_s-1}\notin \{xw:x\in U^*\}$.
If $w=w_s$, we relabel neighbors of $w$ in $B_s$ as $z_1,z_2,\ldots,z_\ell$ such that they are ordered in a clockwise manner around $w_s$.
Then $U^*=\{z_i:i\in[|U^*|]\}$, $z_\ell=u_{n_s-1}$, and $z_{|U^*|},z_{|U^*|+1}$ are two neighbors of $w_s$ in the facial cycle of $F^*$.
Since $z_{|U^*|+1}\notin U^*$ and $\ell\geq |U^*|+1$, it follows that $u_{n_s-1}\notin U^*$, implying $w_su_{n_s-1}\notin \{xw:x\in U^*\}$.

Let $G''=G-\{wx:x\in U^*\}$. Above discussion ensures that $u_{n_s-1}w_s\in E(G'')$.
Then $G[\bigcup_{s<i\leq r}V(B_i)]\cup \{vu_{n_s-1},w_su_{n_s-1}\}$ is a $2$-connected plane subgraph of $G''$.
Therefore, there exists a block in $G''$ that is large than $B_r$ in $G'$, contradicting the condition ($\star$).
\end{proof}

\begin{lemma}\label{near-tri}
$B_1,\ldots,B_{r-1}$ are near-triangulation.
\end{lemma}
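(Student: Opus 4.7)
The plan is to proceed by contradiction. Suppose for some $s\in[r-1]$ that $B_s$ is not a near-triangulation. If $B_s$ is a single edge then it has no hole and the claim is vacuous, so assume $|V(B_s)|\geq 3$; then $B_s$ is a $2$-connected plane graph containing at least one hole $F^*$.

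The first step is to pin down $V(F^*)\cap V(C)$. Using the fact that the cut-vertices of $G'$ are exactly those $u_j$'s lying on $V(C)$ (established just before this lemma), one checks $V(B_s)\cap V(C)=V(w_s\overrightarrow{C}w_{s-1})$ with the convention $w_0=u_1$ when $s=1$. Part $(2)$ of Lemma \ref{clm-two-statements} excludes $F^*$ from this set, while the Case \ref{case-2} hypothesis excludes any hole of $G$ from meeting $V(C)$ in more than one vertex; together these force $V(F^*)\cap V(C)=\emptyset$. Next I apply Lemma \ref{lem-main-1} to the $2$-connected plane graph $B_s$, taking its outer cycle as the cycle of that lemma and $P=w_{s-1}\overrightarrow{P_2}w_s$. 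Since $B_s$ is not a near-triangulation, this supplies vertices $x_1,x_2\in V(P)\subseteq\{u_{n_{s-1}},\ldots,u_{n_s}\}$, a hole $F^*$, endpoints $y_1,y_2\in V(F^*)$, and internally disjoint $x_iy_i$-paths $L_1,L_2$ in $B_s$ with $V(L_i)\cap V(P)=\{x_i\}$ and $V(L_i)\cap V(F^*)=\{y_i\}$. Since $x_1,x_2\in U$, the spokes $vx_1,vx_2$ lie in $G$.

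The key step is to use this structure to contradict condition $(\star)$. Consider the alternative deletion $G^\ast:=G-\{vw:w\in W\}$, where $W$ is the other peripheral neighborhood of $v$; the $U$-spokes survive in $G^\ast$. I assemble the subgraph
\[
H=\Bigl(\bigcup_{i=s+1}^{r}B_i\Bigr)\cup\bigl(w_{s-1}\overrightarrow{P_2}w_s\bigr)\cup L_1\cup L_2\cup\bigl(y_1\overleftarrow{F^*}y_2\bigr)\cup\{vu_j:n_{s-1}\leq j\leq a\},
\]
which is contained in $G^\ast$. The spokes from $v$ to $w_s,w_{s+1},\ldots,w_{r-1}$ eliminate these cut-vertices of $\bigcup_{i>s}B_i$, the $P_2$-portion of $B_s$ connects the handle $L_1\cup(F^*\text{-arc})\cup L_2$ to $\bigcup_{i>s}B_i$ via $w_s$, and a short case analysis (deletion of $v$, of each $u_j$ on the $P_2$-portion of $B_s$, and of each $w_j$ with $j\geq s$) shows that $H$ is $2$-connected, so it lies in a single block of $G^\ast$. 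Finally, $V(H)\supseteq V(B_r)$ and $V(H)$ contains the vertices $u_{n_{s-1}},\ldots,u_{n_s-1}\notin V(B_r)$, giving at least $n_s-n_{s-1}\geq 1$ extra vertices and hence $|V(H)|>|V(B_r)|$, contradicting $(\star)$.

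The main obstacle will be the meticulous verification of $2$-connectivity of $H$ across degenerate configurations, in particular when $x_i\in\{w_{s-1},w_s\}$, when $L_i$ degenerates to a single vertex (which can happen if $F^*$ already touches an interior non-$C$ vertex of $P_2$), and when $s=r-1$ so that no intermediate block helps glue things together. In each such case, the combination of the $P_2$-portion of $B_s$ with the $U$-spokes still provides enough alternative paths to rule out any cut-vertex; the strict size inequality itself is insensitive to these degeneracies because the $P_2$-vertices $u_{n_{s-1}},\ldots,u_{n_s-1}$ are always present in $V(H)$ and never in $V(B_r)$.
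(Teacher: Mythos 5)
Your overall strategy --- contradicting the extremal choice $(\star)$ by switching to the other peripheral neighbourhood $W$ of $v$ --- is genuinely different from the paper's, which never invokes $(\star)$ at this point: the paper applies Lemma \ref{lem-main-1} with $P=w_s\overrightarrow{C}w_{s-1}$ (the portion of $C_s$ lying on the outer cycle $C$, not the $P_2$-portion you use), obtains a near-triangulation $T$ with $|T|<t$ attached to $C$ along two paths, deletes the interior of $T$ together with the internal part of $x_1\overleftarrow{C_s}x_2$, proves the resulting graph $G^*$ is still $2$-connected, and then contradicts the minimality of $m(G)$ over all counterexamples, since $m(G^*)<m(G)$ and $|G^*|\geq t$. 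Unfortunately your route has a genuine gap.

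The subgraph $H$ you build is not contained in $G^{\ast}=G-\{vw:w\in W\}$. Indeed $v\in V(B_r)$ and $B_r$ is a block of $G'$, so $B_r$ contains every edge of $G'$ incident with $v$; since $W\subseteq V(B_r)$, these include all the spokes $vw$ with $w\in W$ --- exactly the edges deleted to form $G^{\ast}$. If you repair this by replacing $B_r$ with $B_r-\{vw:w\in W\}$, the argument breaks at the size comparison: $B_r$ minus its $W$-spokes, even with the $U$-spokes $vu_j$ ($n_{r-1}\leq j\leq a$) added back, need not lie in a single block of $G^{\ast}$. If $\{v,x\}$ is a $2$-cut of $B_r$ whose separated piece $K$ meets $N_G(v)$ only inside $W$ and avoids $\{u_{n_{r-1}},\ldots,u_a\}$, then in $G^{\ast}$ the vertex $x$ alone cuts $K$ off, so the block of $G^{\ast}$ containing $v$ and the $u_j$'s misses $K$; nothing in the hypotheses bounds $|K|$, so the inequality $|V(H)|>|V(B_r)|$ is unjustified and $(\star)$ is not contradicted. (This is also why your $2$-connectivity check, which only tests $v$, the $u_j$'s and the $w_j$'s as potential cut vertices, is incomplete.) Note that the paper's own uses of this ``switch the deleted star'' trick in Lemma \ref{clm-two-statements} are immune to this problem: in part (1) the competitor block only needs to contain $B_i\cup\{v\}$ for some $i<r$, and in part (2) the deletion happens at a vertex $w\neq v$, so all of $B_r$ survives intact.
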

\begin{proof}
Suppose to the contrary that $B_s$ is not a near-triangulation for some $s\in[r-1]$.
Note that $C_s$ is the outer face of $B_s$ and $P=w_s\overrightarrow{C}w_{s-1}$ is a sub-path in $C_s$ that is contained in  $w_{r-1}\overrightarrow{C}u_1$.
Furthermore, each vertex in $V(C_s)-V(P)=\{u_j:n_{s-1}<j<n_s\}$ is not contained in $C$.
By Lemma \ref{lem-main-1},  there exists a hole $F'$ of $D_s$, two vertex $x_1,x_2\in V(P)$ with $x_1\overleftarrow{C_s}x_2$ a sub-path of $P$, two vertices $y_1,y_2\in V(F')$, an $x_1y_1$-path $L_1$ and an $x_2y_2$-path $L_2$ such that the following statements hold: 
\begin{enumerate}
  \item [(A)] $L_1,L_2$ are vertex-disjoint paths with $V(L_i)\cap V(P)=\{x_i\}$ and $V(L_i)\cap V(F')=\{y_i\}$ for $i=1,2$, and
  \item [(B)] the plane subgraph bounded by $x_1L_1y_1\overleftarrow{F'}y_2 L_2x_2\overrightarrow{C_s}x_1$, denoted by $T$, is a near-triangulation.
\end{enumerate}
Since $G$ does not have near-triangulation of order at least $t$, it follows that $|T|\leq t-1$.
Let $G^*$ (resp. $B_s^*$) be the plane subgraph of $G$ (resp. $B_s$) obtained by deleting all interior edges and interior vertices of $T$, as well as edges and internal vertices of $x_1\overleftarrow{C_s}x_2$.
\begin{claim}
$G^*$ is $2$-connected.
\end{claim}
\begin{proof}
Since $V(F')\cap V(w_s\overrightarrow{C}w_{s-1})=\emptyset$ by the second statement in Lemma \ref{clm-two-statements}, 
it follows that $V(F')\cap V(C_s)\subseteq V(C_s)-V(P)$, that is, $V(F')\cap V(C_s)\subseteq \{u_j:n_{s-1}<j<n_s\}$.
Without loss of generality, assume that $V(F')\cap V(C_s)=\{z_1,\ldots,z_c\}$ (if $V(F')\cap V(C_s)=\emptyset$, then $c=0$) and $z_1,\ldots,z_c$ are listed in the order that they appear from $w_{s-1}$ to $w_s$ along the path $w_{s-1}\overrightarrow{C_s}w_s$.
Let $H_i$ be the plane subgraph of $B_s$ bounded by $z_{i-1}\overrightarrow{C_s}z_i\overleftarrow{F'}z_{i-1}$ for each $i\in[c]$ (if $c\leq 1$, then $H_1=B_s$).
Then each $H_i$ is either an edge or a $2$-connected plane subgraph of $B_s$.
Since $T$ is a near-triangulation and $x_1,x_2\in H_1$, we have that $T$ is contained in $H_1$ and 
$y_1,y_2\in V(z_1\overleftarrow{F'}z_c)$ (see Figure \ref{all-triangule}).
Moreover, $H_1$ is $2$-connected since $|H_1|\geq 3$.

Let $R_1=w_{s-1}\overrightarrow{C_s}z_1$ and $R_2=z_c\overrightarrow{C_s}w_s$ (see Figure \ref{all-triangule}).
Assume that $V(R_1)\cap V(L_1)=\{a_1,\ldots,a_p\}$ and $V(R_2)\cap V(L_2)=\{b_1,\ldots,b_q\}$, and $a_1,\ldots,a_p,b_q,\ldots,b_1$ are listed in clockwise order around $C_s$.
It follows from the statement (A) that $C^1=a_pL_1y_1\overleftarrow{F'}y_2L_2b_q\overleftarrow{C_s}a_p$ is a cycle.
Hence, the plane subgraph bounded by $C^1$, say $D_1$, is $2$-connected.

Let $X_i=a_{i-1}\overrightarrow{C_s}a_iL_1a_{i-1}$ and $Y_i=b_{j-1}\overleftarrow{C_s}b_jL_2b_{j-1}$ for $2\leq i\leq p$ and $2\leq j\leq q$ (see Figure \ref{all-triangule}).
In addition, let $X_1=x_1\overrightarrow{C_s}a_1L_1x_1$ and $Y_1=x_2\overleftarrow{C_s}b_1L_2x_2$.
When no confusion arises, we use $X_i,Y_j$ themselves to denote the plane subgraphs bounded by $X_i,Y_j$, respectively.
It is clear that each plane subgraph in $\{X_1,\ldots,X_p,Y_1,\ldots,Y_q\}$ is either an edge or a $2$-connected plane graph.
Moreover, $X_1,Y_1$ are $2$-connected plane graphs, since $\{x_1,w_{s-1},a_1\}\subseteq V(X_1)$ and $\{x_2,w_s,b_1\}\subseteq V(Y_1)$.
Therefore, $B_s^*$ consists of blocks in $\mathcal{B}=\{X_1,\ldots,X_p,D_1,Y_q,\ldots,Y_1\}$.
It is clear that for each block $B\in\mathcal{B}$, the boundary of the outer face of $B$ intersects $w_{s-1}\overrightarrow{C_s}w_s$ at two or more vertices.
Since $V(w_{s-1}\overrightarrow{C_s}w_s)=\{u_j:n_{s-1}\leq j\leq n_s\}$ and $vu_j\in E(G)$ for each $n_{s-1}\leq j\leq n_s$, it can be deduced that $G^*$ is a $2$-connected plane graph.
\end{proof}

Note that $|T|<t$.
If $|G^*|<t$, then 
$$1\leq m(G)<\frac{n-(t-1)}{3t-7}\leq \frac{(|T|+|G^*|-4)-(t-1)}{3t-7} < 1,$$ 
a contradiction.
Hence, $|G^*|\geq t$.
However, $m(G^*)<m(G)$ and $G^*$ does not contain near-triangulation of order $t$, which contradicts the choice of $G$.
\end{proof}

\begin{figure}[ht]
    \centering
    \includegraphics[width=250pt]{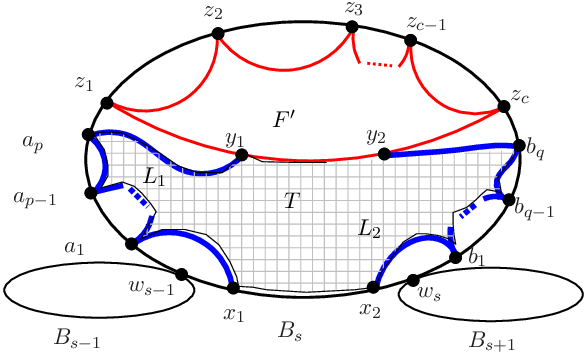}\\
    \caption{Some notations on  $B_s$.} \label{all-triangule}
\end{figure}

Let $G_1=\bigcup_{i\in[r-1]}B_i\cup\{vu_i:i\in [n_{r-1}]\}$ and $G_2=B_r\cup\{vu_i:n_{r-1}\leq i\leq a\}$.
It is clear that $G_2$ is $2$-connected and $G_1\cap G_2$ is the edge $vw_{r-1}$ (note that $w_{r-1}$ represents $u_{n_{r-1}}$).
By Lemma \ref{near-tri}, $G_1$ is a near-triangulation of $G$.
Hence, $|G_1|\leq t-1$ and $m(G_2)=m(G)$.

Note that $t\geq 4$.
If $|G_2|<t$, then 
$$1\leq m(G)<\frac{n-(t-1)}{3t-7}=\frac{|G_1|+|G_2|-2-(t-1)}{3t-7}\leq \frac{t-3}{3t-7}<1,$$
a contradiction.
If  $|G_2|\geq t$, then $|B_r|=|G_2|\geq t$ and $m(B_r)<m(G_2)=m(G)$. Since $B_r$ is $2$-connected and
$$m(B_r)\leq m(G)-1<\frac{n-(t-1)}{3t-7}-1=\frac{n-(3t-7)-(t-1)}{3t-7}<\frac{|B_r|-(t-1)}{3t-7},$$
we get a contradiction of the choice of $G$.
\hfill$\square$

\section{Proof of Theorem \ref{thm-2ck}}

The proof of  Theorem \ref{thm-2ck} is divided into two parts: proofs of the upper bound and the lower bound. In fact, Li \cite{L} has already obtained $ex_{\mathcal{P}}(n,2C_k)\geq\left[3-\Omega(k^{\log_23})^{-1}\right]n$.
Here, we construct a new $2C_k$-free planar graphs to enhance the previous lower bound of  $ex_{\mathcal{P}}(n,2C_k)$.
\smallskip

\noindent{\bf Proof of the lower bound}:
For a positive integer $t\geq 1$, let $n_0=\frac{3^{t+1}+5}{2}$.
Following \cite{Moon} (see also\cite{CY}), there is a triangulation $G_n$ of order $3\leq n\leq n_0$ with the longest cycle of length less than $\frac{7}{2}n_0^{\log_32}$ (in fact, the original construction only gives $G_{n_0}$. However, the method of construction guarantees that for each $3\leq n\leq n_0$, there is an $n$-vertex subgraph of $G_{n_0}$ that remains a triangulation). 
Let $m=\left\lfloor(2k/7)^{\log_23}\right\rfloor$.
Let $n-2=t(m-2)+n'$, where $t,n'$ are integers and $0\leq n'<m-2$. 
We construct a graph $G$ by taking $t$ copies of $G_m$ and a copy of $G_{n'}$ (if $n'=0$, then let $G_{n'}=K_2$) ensuring that they share exactly one common edge.
It is clear that $G$ is a planar graph of order $t(m-2)+n'+2=n$.
Since the length of the maximum cycle in $G_m$ or $G_{n'}$ is less than  $\frac{7}{2}m^{\log_32}\leq k$, we conclude that $G$ is $2C_k$-free.
Hence, 
\begin{align}
ex_{\mathcal{P}}(n,2C_k)&\geq t\cdot e(G_m)+e(G_{n'})-t\geq t(3m-6)+3n'-6-t\\
&=3n-12-t\geq 3n-12-\left[(2k/7)^{\log_23}-2\right]^{-1}(n-2).
\end{align}

\noindent{\bf Proof of the upper bound}:
The proof proceeds by induction.
If $k^{\log_23}\leq n\leq 8k^{\log_23}$, then 
$$e(G)\leq 3n-6< 3n-6-\frac{n}{8k^{\log_23}}+k^3,$$ 
the result holds.
So, we assume that 
\begin{align}\label{eq-2-1}
n>8k^{\log_23}.
\end{align} 
Let $G$ be a $2C_k$-free plane graph with $e(G)=ex_{\mathcal{P}}(n,2C_k)$.
It is clear that $G$ is connected; otherwise we can add an edge between two components of $G$ to make it remain $2C_k$-free, a contradiction.
If $G$ is $C_k$-free, then by Theorem \ref{thn-ck}, $e(G)\leq 3n-6-\frac{n}{4k^{\log_23}}< 3n-6-\frac{n}{8k^{\log_23}}+k^3$, the result holds.
So, we assume that $G$ contains a $C_k$.

\setcounter{case}{0}
\begin{case}\label{case-11}
$G$ is $2$-connected.
\end{case}

In that case, the boundary of each face in $G$ is a cycle. 
Assume that $c$ is the smallest number of vertices that need to be removed from $G$ to make it $C_k$-free, and let $v_1,v_2,\ldots,v_c$ be one such set of vertices. Then $c\leq k$, since we can remove vertices of a $k$-cycle to make it $C_k$-free. Moreover, by the minimality of $c$, we have that for any $i,j\in[c]$, there exists a $k$-cycle in $G$ such that $v_i$ is contained in the $k$-cycle, but $v_j$ not.

\bigskip
\noindent{\bf Case 1.1.} $c=1$.\bigskip

Since $G$ is $2$-connected, it follows that $G'=G-v_1$ is connected and is $C_k$-free.
Assume that $G'$ has $r$ blocks $B_1,B_2,\ldots,B_r$, where $|B_i|\geq k^{\log_23}$ for $i\in[q]$ and $|B_i|<k^{\log_23}$ for $q+1\leq i\leq r$.
For each $i\in[q]$, since $B_i$ is a $C_k$-free $2$-connected plane graph, it follows that 
$$m(B_i)\geq \frac{|B_i|-(k^{\log_23}-1)}{3k^{\log_23}-7};$$ 
otherwise, by Theorem \ref{thm-near} and Lemma \ref{lem-theta-ck}, there is a $C_k$ in $B_i$, a contradiction.
Therefore, by Lemma \ref{joinable},
\begin{align*}
3n-6-e(G)&\geq (r-1)+\sum_{i\in[q]}m(B_i)\geq (r-1)+\sum_{i\in[q]}\frac{|B_i|-(k^{\log_23}-1)}{3k^{\log_23}-7}\\
&=(r-1)-\frac{q(k^{\log_23}-1)}{3k^{\log_23}-7}+\frac{n+r-2-\sum_{q+1\leq i\leq r}|B_i|}{3k^{\log_23}-7}\\
&\geq (r-1)-\frac{q(k^{\log_23}-1)}{3k^{\log_23}-7}+\frac{n+r-2-(r-q)k^{\log_23}}{3k^{\log_23}-7}\\
&=\frac{1}{3k^{\log_23}-7}\left[n+(2r-3)k^{\log_23}-6r+q+5\right]\\
&\geq\frac{n-k^{\log_23}-1}{3k^{\log_23}-7},
\end{align*} 
the last inequality holds since $k\geq 5$ and $k^{\log_23}\geq 6$.
Consequently, 
$$e(G)\leq 3n-6-\frac{n-k^{\log_23}-1}{3k^{\log_23}-7}< 3n-6-\frac{n}{8k^{\log_23}}+k^3.$$

\noindent{\bf Case 1.2.} $c\geq 2$.\bigskip

\noindent{\em Case 1.2.1.} There is an $\ell\in [c]$ such that $G''=G-v_\ell$ has $t\geq 2$ components $B_1,B_2,\ldots,B_t$.\bigskip

\begin{claim}
There is a unique block of $G''$ that contains $C_k$.
\end{claim}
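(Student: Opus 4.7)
My plan is to prove the claim by contradiction: assume two distinct blocks of $G''$ each contain a copy of $C_k$, and exhibit a $2C_k\subseteq G$, contradicting the hypothesis that $G$ is $2C_k$-free.

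First I would observe that $G''$ itself contains a $C_k$, since otherwise $\{v_\ell\}$ alone would be a $C_k$-killing vertex set, contradicting the minimality $c\geq 2$. Any such $k$-cycle is $2$-connected and so lies entirely inside a single block of $G''$; thus at least one block contains a $C_k$, and the content of the claim is the uniqueness.

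Now suppose two distinct blocks $B,B'$ of $G''$ contain $k$-cycles $D,D'$ respectively. If $B$ and $B'$ lie in distinct pieces among $B_1,\ldots,B_t$, then $D$ and $D'$ are automatically vertex-disjoint, giving a $2C_k\subseteq G$ and the desired contradiction. Otherwise $B$ and $B'$ sit inside the same piece, and being distinct blocks of a connected subgraph they share at most one vertex $w$. Whenever $V(D)\cap V(D')=\emptyset$ (either because no such shared vertex exists, or because at least one of the cycles misses $w$), we are again done immediately. The only remaining case is $V(B)\cap V(B')=\{w\}$ with both $D$ and $D'$ passing through $w$.

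The main obstacle is exactly this last sub-case, where $D$ and $D'$ meet precisely at $w$. To break the tie I would use the ambient $2$-connectivity of $G$: since $G-w$ remains connected, $v_\ell$ must have a neighbor in every component of $G''-w$, and in particular a neighbor $u\in V(B)\setminus\{w\}$ and a neighbor $u'\in V(B')\setminus\{w\}$. Using the edges $v_\ell u$ and $v_\ell u'$ together with the $2$-connectivity inside $B$ and $B'$ (which supplies an abundance of internal paths between prescribed endpoints), I would reroute one of the $k$-cycles through $v_\ell$ so that it avoids $w$, obtaining a cycle of $G$ of length at least $k$ disjoint from the other cycle. A final application of Lemma \ref{lem-theta-ck}, possibly after extracting a near-triangulation via Theorem \ref{thm-near} from the relevant block, would then yield a $k$-cycle inside this rerouted structure, producing the required $2C_k\subseteq G$. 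The real difficulty is controlling the length of the rerouted cycle to be exactly $k$ while keeping it vertex-disjoint from the unaltered one; this is where the planarity of $G$ and the minimality of $c$ (which guarantees a rich supply of $k$-cycles through prescribed vertices) would have to be used most delicately.
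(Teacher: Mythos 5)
Your reduction to the sub-case where the two blocks $B,B'$ meet in a single cut-vertex $w$ and both $k$-cycles pass through $w$ is correct and agrees with the paper, but your proposed resolution of that sub-case has a genuine gap. As literally described, the rerouted cycle cannot exist: a cycle through $v_\ell$ avoiding $w$ that meets both $B-w$ and $B'-w$ would contain a path between these two sets inside $G''-w$, yet $B-w$ and $B'-w$ lie in different components of $G''-w$ (otherwise $B\cup B'$ would sit in a single block). More importantly, even if you reroute on one side only, $2$-connectivity supplies connecting paths with no control whatsoever over their length, so nothing forces the new cycle to have length exactly $k$; and the tools you invoke to recover a $C_k$ do not apply here. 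Theorem \ref{thm-near} needs an upper bound on the number of missing edges of the relevant block, which is unavailable (that block contains a $C_k$, so the $C_k$-freeness used elsewhere to bound $m$ is absent), and Lemma \ref{lem-theta-ck} applies only to near-triangulations and would in any case produce a $C_k$ with no guarantee of passing through $v_\ell$, avoiding $w$, or being disjoint from the other cycle. You correctly flagged the length-control problem as ``the real difficulty,'' but that difficulty is precisely the content of the claim and is left unresolved.

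The missing idea is that the needed $k$-cycle should be \emph{deduced} from the minimality of $c$, not constructed. Once every $C_k$ of $G''$ is known to pass through $w$ (which follows because any $C_k$ of $G''$ lives in one block and must intersect the cycles in both $B$ and $B'$), every $C_k$ of $G$ passes through $w$ or $v_\ell$, so $\{v_\ell,w\}$ is a $C_k$-hitting set; with $c\geq 2$ this forces $c=2$ and $\{v_1,v_2\}=\{v_\ell,w\}$. The property recorded at the start of Case 1 --- for any $i,j\in[c]$ there is a $k$-cycle of $G$ containing $v_i$ but not $v_j$ --- then hands you, for free, a $k$-cycle $C'$ through $v_\ell$ avoiding $w$. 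Since $C'-v_\ell$ is a path of $G''-w$, it is confined to a single $w$-component of $G''$ and is therefore vertex-disjoint from the $C_k$ contained in a block lying in a different $w$-component; this yields the $2C_k$ and completes the contradiction. This is exactly the route the paper takes.
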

\begin{proof}
Since $c\geq 2$, $G''$ contains a $C_k$.
Suppose to the contrary that there are $s\geq 2$ blocks that contain $C_k$, say $B_{n_1},B_{n_2},\ldots,B_{n_s}$.
It is clear that $B_{n_1},B_{n_2},\ldots,B_{n_s}$ share a common cut-vertex $w$; otherwise, there are two integer $i,j\in[s]$ such that $B_{n_i}\cap B_{n_j}=\emptyset$, and hence $G$ contains a $2C_k$, a contradiction.
This yields that every $C_k$ in $G''$ contains $w$.
Consequently, each $C_k$ of $G$ contains either $w$ or $v_\ell$, implying $c=2$ and $\{v_1,v_2\}=\{v_\ell,w\}$.

Assume that $B_1,B_2,\ldots,B_a$ are all blocks of $G''$ that contain $w$, and they are listed in clockwise order around $w$.
Note that $n_i\in [a]$ for each $i\in [s]$.
Let $D_i$ be the $w$-component of $G''$ that contains $B_i$.
Since there is a $k$-cycle $C'$ that encompasses $v_\ell$ but excludes $w$, it follows that there is an integer $j\in[a]$ such that $V(C')\subseteq V(D_j-w)\cup\{v_\ell\}$.
However, since $s\geq 2$, there is a $j'\in[s]$ such that $n_{j'}\neq j$.
Then $G$ contains two vertex-disjoint $k$-cycles, one is $C'$ and the other is contained in $B_{n_{j'}}$, a contradiction.
\end{proof}

Without loss of generality, assume that $B_1$ is the unique block of $G''$ that contains $C_k$, and assume that $B_2$ is a leaf-block of $G''$ with $B_1\neq B_2$.
We further assume that $w$ is the unique cut-vertex of $G''$ belonging to $B_2$.
Let $G_1=G-(V(B_2)-w)$ and $G_2=G[\{v_\ell\}\cup V(B_2)]$.
Then $G_1\cup G_2=G$, $V(G_1)\cap V(G_2)=\{w,v_\ell\}$ and $|E(G_1)\cap E(G_2)|\leq 1$.
If $|G_1|\geq k^{\log_23}$, then by induction, 
\begin{align}\label{eq-case1-1}
e(G_1)\leq 3|G_1|-6-\frac{|G_1|}{8k^{\log_23}}+k^3;
\end{align} 
otherwise, $e(G_1)\leq 3|G_1|-6$.
If $|G_2|\geq k^{\log_23}+1$, then since $B_2$ is $C_k$-free, it follows from Theorem \ref{thm-near} and Lemma \ref{lem-theta-ck} that 
$$m(B_2)<\frac{|B_2|-(k^{\log_23}-1)}{3k^{\log_23}-7}$$ 
and
\begin{align}\label{eq-case1-2}
e(G_2)\leq 3|G_2|-6-m(B_2)\leq 3|G_2|-6-\frac{|G_2|-1-(k^{\log_23}-1)}{3k^{\log_23}-7}= 3|G_2|-6-\frac{|G_2|-k^{\log_23}}{3k^{\log_23}-7}.
\end{align}
For $i\in[2]$, it is clear that either $wv_\ell\notin E(G_1)\cap E(G_2)$ and $G_i$ is not a triangulation, or $wv_\ell\in E(G_1)\cap E(G_2)$. Both cases lead to the conclusion that  
\begin{align}\label{eq-case1-3}
|G_i|-|E(G_1)\cap E(G_2)|\leq 3n-7.
\end{align} 
Next, we will calculate the number of edges in $G$ by considering several cases.

If $|G_1|\geq k^{\log_23}$ and $|G_2|\leq 8k^{\log_23}$, then by Ineqs. (\ref{eq-case1-1}) and (\ref{eq-case1-3}),
\begin{align*}
e(G)&= e(G_1)+e(G_2)-|E(G_1)\cap E(G_2)|\\
&\leq 3|G_1|-6-\frac{|G_1|}{8k^{\log_23}}+k^3+3|G_2|-7\\
&< 3n-6-\frac{n}{8k^{\log_23}}+k^3.
\end{align*}
If $|G_1|\geq k^{\log_23}$ and $|G_2|> 8k^{\log_23}$, then by Ineqs. (\ref{eq-case1-1}) and (\ref{eq-case1-2}),
\begin{align*}
e(G)&\leq e(G_1)+e(G_2)\leq 3|G_1|-6-\frac{|G_1|}{8k^{\log_23}}+k^3+3|G_2|-6-\frac{|G_2|-k^{\log_23}}{3k^{\log_23}-7}\\
&\leq 3n-6-\frac{|G_1|}{8k^{\log_23}}+k^3-\frac{|G_2|-k^{\log_23}}{3k^{\log_23}}\\
&=3n-6+k^3-\frac{n}{8k^{\log_23}}-\frac{5|G_2|/3-8k^{\log_23}/3+2}{8k^{\log_23}}\\
&<3n-6+k^3-\frac{n}{8k^{\log_23}}.
\end{align*}
If $|G_1|< k^{\log_23}$ and $|G_2|\geq k^{\log_23}$, then by induction, 
$$e(G_2)\leq 3|G_2|-6-\frac{|G_2|}{8k^{\log_23}}+k^3,$$
since $G_2$ is $2C_k$-free.
Combining Ineq. (\ref{eq-case1-3}), 
we have that
$$e(G)= e(G_1)+e(G_2)-|E(G_1)\cap E(G_2)|<3n-6-\frac{n}{8k^{\log_23}}+k^3.$$
Finally, if $|G_1|< k^{\log_23}$ and $|G_2|< k^{\log_23}$, then $n<2 k^{\log_23}<8k^{\log_23}$, contradicting Ineq. (\ref{eq-2-1}).

\bigskip
\noindent{\em Case 1.2.2.} For each $\ell\in [c]$, $G-v_\ell$ is a $2$-connected plane graph. \bigskip

For each $i\in[c]$, denote by $F_i$ the face of $G-v_i$ that encloses $v_i$.
It is clear that the boundary of each $F_i$ is a cycle, since $G-v_i$ is $2$-connected.
Let $\mathcal{F}_i$ be the set of non-$3$-face $F$ in $G$ such that $V(F)\cap\{v_1,v_2,\ldots,v_c\}=\{v_i\}$.
Then the number of joinable edges in $G$ is at least $\sum_{i\in[c]}\sum_{F\in \mathcal{F}_i}(|V(F)|-3)$, and hence
\begin{align}\label{ineq-final}
e(G)\leq 3n-6-\sum_{i\in[c]}\sum_{F\in \mathcal{F}_i}(|V(F)|-3).
\end{align}

\begin{claim}\label{clm-222}
For each $p\in[c]$, if $d(v_p)> k^2$, then $\sum_{F\in \mathcal{F}_p}(|V(F)|-3)\geq d-k^2$. 
\end{claim}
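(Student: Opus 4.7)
Since we are in Case~1.2.2, $G-v_p$ is $2$-connected, so the face of $G-v_p$ formerly enclosing $v_p$ is bounded by a single cycle $C^*$ passing through every neighbor of $v_p$. The $d=d(v_p)$ faces around $v_p$ in $G$ correspond bijectively to the $d$ arcs of $C^*$ between consecutive neighbors, and each $F\in\mathcal{F}_p$ is a non-$3$-face contributing $|V(F)|-3\geq 1$ to the sum. The plan is therefore to show that at most $k^2$ of these $d$ face-arcs are \emph{bad}, meaning the corresponding face is either a $3$-face or contains some $v_j$ with $j\neq p$; then $|\mathcal{F}_p|\geq d-k^2$, and the claim follows immediately.

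For the easy part, each $v_j$ with $j\neq p$ appears at most once on the cycle $C^*$ and therefore lies on at most two face-arcs around $v_p$ (exactly two when $v_j=u_i$ for some neighbor $u_i$, and otherwise one as an interior vertex of a single arc). Hence at most $2(c-1)\leq 2(k-1)$ face-arcs contain some $v_j$ with $j\neq p$. The substantial step is to bound the number $t_p$ of $3$-face arcs around $v_p$ by $k^2-2(k-1)$ via the $2C_k$-freeness of $G$. Label the neighbors $u_1,\ldots,u_d$ of $v_p$ in cyclic order; position $i$ is a $3$-face iff $u_iu_{i+1}\in E(G)$, and any run of $\ell\geq k-2$ consecutive $3$-face positions produces the $k$-cycle $v_p u_a u_{a+1}\cdots u_{a+k-2} v_p$ through $v_p$, with $\lfloor(\ell+1)/(k-1)\rfloor$ such cycles pairwise vertex-disjoint apart from $v_p$ available inside a single run. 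Suppose for contradiction that $t_p>k^2-2(k-1)$; summing the packing contributions across all runs one extracts $k+1$ pairwise (non-$v_p$) vertex-disjoint $k$-cycles through $v_p$. Since $c\geq 2$, there is some $k$-cycle $C'$ of $G$ avoiding $v_p$, and $|V(C')|=k$ forces $C'$ to meet at most $k$ of those $k+1$ cycles; hence one of them is entirely vertex-disjoint from $C'$, producing $2C_k\subseteq G$, a contradiction.

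The main obstacle is the extraction of $k+1$ disjoint cycles in the substantial step, particularly when the $3$-faces are distributed into many short runs (none reaching length $k-2$), since such runs individually yield no $k$-cycle from consecutive triangles. In that regime, however, the number of runs is equal to $s_p=d-t_p$, which becomes correspondingly large, and the direct bound $|\mathcal{F}_p|\geq s_p-2(k-1)$ already delivers the conclusion; the long-run case produces the contradiction directly from the packing argument, and a careful interpolation between the two regimes (supplemented, when useful, by $k$-cycles through $v_p$ that use non-$3$-face arcs of appropriate length summing to $k-2$) closes the remaining intermediate cases and confirms the threshold $k^2$.
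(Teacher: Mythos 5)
Your reduction of the claim to ``at most $k^2$ of the $d$ face-arcs around $v_p$ are bad'' is where the real difficulty sits, and the step meant to handle it is not carried out. The problematic regime is the one you name yourself: many triangular faces at $v_p$ distributed in short runs, each of length less than $k-2$. There your packing argument extracts no $k$-cycles at all, so it gives no upper bound on the number $t_p$ of $3$-faces; and your fallback bound $|\mathcal{F}_p|\geq s_p-2(k-1)=d-t_p-2(k-1)$ yields the required $d-k^2$ only when $t_p\leq k^2-2(k-1)$, i.e.\ exactly when the bound you could not prove already holds. Concretely, if the faces at $v_p$ alternated between $3$-faces and $4$-faces with $d=4k^2$, you would have $t_p=s_p=2k^2$ and $\sum_{F\in\mathcal{F}_p}(|V(F)|-3)\leq 2k^2<d-k^2=3k^2$. (Such a configuration is in fact impossible in a $2C_k$-free graph, but excluding it requires $k$-cycles through $v_p$ that traverse non-triangular arcs as well --- precisely the supplement you mention in parentheses and never execute.) The ``careful interpolation'' is therefore an unproved assertion rather than a proof. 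A secondary weakness is that your target $|\mathcal{F}_p|\geq d-k^2$ is strictly stronger than the claim, since it discards the weights $|V(F)|-3$; nothing in the hypotheses forces a bound on the number of large faces rather than on their total size.

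The paper's proof sidesteps all of this by counting vertices rather than faces. Note that $\sum_{F\in \mathcal{F}_p}(|V(F)|-3)$ equals the number of non-neighbours of $v_p$ lying on those arcs of the facial cycle of $G-v_p$ around $v_p$ that correspond to faces of $\mathcal{F}_p$. Fixing a $k$-cycle $C$ disjoint from $v_p$ (available since $c\geq 2$) and discarding the at most $k\cdot k$ neighbours of $v_p$ that are within $k$ positions of a vertex of $C$ on that facial cycle, one shows that for each remaining neighbour $w_{\gamma}$ the vertex $w_{\gamma+k-2}$ reached by walking $k-2$ steps along the facial cycle cannot be adjacent to $v_p$: otherwise $v_pw_{\gamma}w_{\gamma+1}\cdots w_{\gamma+k-2}v_p$ would be a $k$-cycle disjoint from $C$, giving a $2C_k$. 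Since the shift $\gamma\mapsto\gamma+k-2$ is injective, this produces at least $d-k^2$ distinct non-neighbours, each interior to a face of $\mathcal{F}_p$. This shift map, applied to arbitrary (not only triangulated) stretches of the facial cycle, is the ingredient your argument is missing.
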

\begin{proof}
Assume that $d(v_p)=d$ and $V(F_p)=\ell$, and $w_1,w_2,\ldots,w_{\ell}$ are vertices of $V(F_p)$ listed in clockwise order around $v_p$.
In addition, assume that $\widetilde{F}_1,\widetilde{F}_2,\ldots,\widetilde{F}_r$ are all non-$3$-faces of $G$ whose facial cycles contain $v_p$.
For each $j\in[r]$, we denote by $\widetilde{C}_j$ the facial cycle of $\widetilde{F}_j$ and let $\widetilde{P}_j=\widetilde{C}_j-v_p$.
It is clear that $\widetilde{F}_j\in \mathcal{F}_p$ if and only if $V(\widetilde{P}_j)\cap \{v_1,v_2,\ldots,v_c\}=\emptyset$.

Let $S_1=\{i\in[\ell]:w_i\in V(C)\}$. Then we have $|S_1|\leq k$.
For each $j\in S_1$, define $A_j$ as the set of $k$ consecutive vertices of $N_G(v_p)$ arranged in a counter clockwise direction around the facial cycle of $F_p$ and started by $w_j$. Let $B=N_G(v_p)-\bigcup_{i\in S_1}A_i$.
Then $|B|\geq d-k|S_1|\geq d-k^2$.
Let $S_2=\{j\in[r]:\widetilde{P}_j \mbox{ contains a vertex of }V(C)\}$.
For each $w_{\gamma}\in B$, we assert that the following statements hold:
\begin{itemize}
  \item [(i)] the path $w_{\gamma} w_{\gamma+1}\ldots w_{\gamma+k-2}$ does not contain any vertex of $V(C)$, and
  \item [(ii)] $w_{\gamma+k-2}$ is not contained in any $\widetilde{P}_j$ for $j\in S_2$.
\end{itemize}
Now, we give a concise proof for the two statements.
Let $w_{\gamma},w_{\gamma_1},\ldots,w_{\gamma_{k-1}}$ be consecutive vertices of $N_G(v_p)$ arranged in a clockwise direction around the facial cycle of $F_p$ and started by $w_{\gamma}$.
It is clear that 
$$\{w_{\gamma},w_{\gamma+1},\ldots, w_{\gamma+k-2}\}\subseteq V(w_{\gamma}\overrightarrow{F_p}w_{\gamma_{k-2}}).$$
Therefore, there is an integer $j$ such that $0\leq j\leq k-2$ and $w_{\gamma+k-2}\in V(w_{\gamma_j} \overrightarrow{F_p}w_{\gamma_{j+1}})$ (where we consider $\gamma_0$ as $\gamma$).
Since $w_{\gamma}\in B$, by the definitions of $A_i$ and $B$, we conclude no vertex of 
$w_{\gamma} \overrightarrow{F_p}w_{\gamma_{k-1}}$ belongs to $C$.
Hence, the two statements hold.

By statement (i), we have that $w_{\gamma+k-2}\notin N_G(v_p)$; otherwise, $C\cup v_pw_{\gamma}w_{\gamma+1}\ldots w_{\gamma+k-2}v_p$ is a $2C_k$ of $G$, a contradiction.
Hence, $v_pw_{\gamma+k-2}$ is an joinable edge.
Combining statement (ii), $v_pw_{\gamma+k-2}$ is an joinable edge of some $F\in \mathcal{F}_p$.
Since $d(v_p)> k^2$, for any two integers $i,j\in B$, $w_{i+k-2}\neq w_{j+k-2}$.
Hence, $\sum_{F\in \mathcal{F}_p}(|V(F)|-3)\geq |B|\geq d-k^2$.
\end{proof}

Assume that each vertex of $v_1,v_2,\ldots,v_q$ has degree greater that $k^2$ and each vertex of $v_{q+1},\ldots,v_c$ has degree at most $k^2$.  
If 
$$\sum_{i\in[c]}d(v_i)>\frac{n}{8k^{\log_23}}+k^3,$$ 
then by Claim \ref{clm-222}, we have that
\begin{align*}
\sum_{i\in[c]}\sum_{F\in \mathcal{F}_i}(|V(F)|-3)&\geq \sum_{i\in[q]}(d_G(v_i)-k^2)> \left(\frac{n}{8k^{\log_23}}+k^3-(c-q)k^2\right)-qk^3\geq \frac{n}{8k^{\log_23}}-k^3.
\end{align*}
By Ineq. (\ref{ineq-final}), we have that
\begin{align*}
 e(G)&\leq 3n-6-\sum_{i\in[c]}\sum_{F\in \mathcal{F}_i}(|V(F)|-3)< 3n-6-\frac{n}{8k^{\log_23}}+k^3.
\end{align*}
If 
$$\sum_{i\in[c]}d(v_i)\leq\frac{n}{8k^{\log_23}}+k^3,$$
then $e(G^*)\geq e(G)-\frac{n}{8k^{\log_23}}+k^3$, where
$G^*=G-\{v_1,\ldots,v_c\}$ is $C_k$-free.
Suppose to the contrary that 
$$e(G)=ex_{\mathcal{P}}(n,2C_k)\geq 3n-6-\frac{n}{8k^{\log_23}}+k^3.$$
Then  
\begin{align*}
e(G^*)&\geq  3n-6-\frac{n}{4k^{\log_23}}>3|G^*|-6-\frac{|G^*|}{4k^{\log_23}}.
\end{align*}
Since $n>8k^{\log_23}$, it follows that  $|G^*|\geq k^{\log_23}$. By Theorem \ref{thn-ck}, $G'$ contains a $C_k$, a contradiction.

\begin{case}\label{case-22}
$G$ has a cut-vertex.
\end{case}

\begin{claim}\label{clm-last}
There is a unique block of $G$ that contains $C_k$.
\end{claim}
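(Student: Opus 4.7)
The plan is to argue by contradiction: assume two distinct blocks $B_1, B_2$ of $G$ each contain a copy of $C_k$, and derive $2C_k \subseteq G$, contradicting the hypothesis. Since any two distinct blocks of a graph share at most one vertex (necessarily a cut-vertex), $|V(B_1) \cap V(B_2)| \in \{0,1\}$, so I split into two cases.

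If $V(B_1) \cap V(B_2) = \emptyset$, then any $C_k \subseteq B_1$ and any $C_k \subseteq B_2$ are already vertex-disjoint, yielding $2C_k \subseteq G$ at once. Otherwise $V(B_1) \cap V(B_2) = \{w\}$ for some cut-vertex $w$; if $B_1$ contains a $C_k$ avoiding $w$, then it is vertex-disjoint from any $C_k \subseteq B_2$ (since their intersection can only lie in $\{w\}$, which is now excluded), and the symmetric argument rules out $B_2$ having a $C_k$ avoiding $w$. Hence I may assume every copy of $C_k$ in $B_1$ and every copy of $C_k$ in $B_2$ passes through $w$; equivalently, both $B_1 - w$ and $B_2 - w$ are $C_k$-free.

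The main obstacle is dispatching this last subcase, where the two blocks' $k$-cycles are pinned to the cut-vertex $w$. My plan is to exploit the edge-maximality $e(G) = ex_{\mathcal{P}}(n, 2C_k)$. Because $B_1 \cap B_2 = \{w\}$, in the planar embedding the edges at $w$ belonging to $B_1$ and those belonging to $B_2$ form consecutive arcs in the cyclic order around $w$, so there is a face $F$ of $G$ incident to $w$ whose boundary meets both $V(B_1) - \{w\}$ and $V(B_2) - \{w\}$. I pick $u_1 \in V(B_1) - \{w\}$ and $u_2 \in V(B_2) - \{w\}$ on $\partial F$ and add the joinable edge $e = u_1 u_2$. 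The graph $G + e$ is planar, and by maximality it must contain a $2C_k$. Since $G$ itself is $2C_k$-free, $e$ lies on exactly one cycle $C'$ of this $2C_k$, while the other cycle $C''$ is a $C_k$ of $G$ vertex-disjoint from $C'$. As $w$ is the unique separator between $u_1$ and $u_2$ in $G$, every $u_1 u_2$-path in $G$ passes through $w$, so $C'$ must contain $w$, and consequently $C''$ avoids $w$.

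Finally I show this $C''$ is impossible. If $C'' \subseteq B_1$ or $C'' \subseteq B_2$ it contradicts the standing assumption that every $C_k$ in those blocks uses $w$. Otherwise $C''$ sits in a third block $B_3$, and since $V(B_3) \cap V(B_i)$ is at most one vertex for $i = 1,2$, a short case analysis on whether this intersection equals $\{w\}$ or a different cut-vertex $v$—combined with the fact that $C''$ avoids $w$ and, if necessary, the freedom to choose a $C_k$ in $B_i$ avoiding $v$ (else $B_i - v$ is also $C_k$-free, which together with $B_i - w$ being $C_k$-free becomes highly restrictive and can be pushed to a contradiction via Theorem \ref{thm-near} and Lemma \ref{lem-theta-ck})—produces two vertex-disjoint $C_k$'s already in $G$, delivering the final contradiction.
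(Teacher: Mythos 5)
Your overall strategy is sound up to the last step and runs parallel to the paper's: both arguments show that blocks containing $k$-cycles must share a single cut-vertex $w$ and then exploit the edge-maximality of $G$ by inserting a joinable edge across $w$. The genuine gap is in your final paragraph. After adding $e=u_1u_2$ you correctly reduce to a $k$-cycle $C''$ of $G$ that avoids $w$ and lies in a third block $B_3$ with, say, $V(B_3)\cap V(B_1)=\{v\}$ for some cut-vertex $v\neq w$. Your proposed escape --- that if every $C_k$ in $B_1$ passes through both $v$ and $w$, then Theorem \ref{thm-near} and Lemma \ref{lem-theta-ck} "can be pushed to a contradiction" --- does not work: those results require the density hypothesis $m(\cdot)<\frac{v(\cdot)-(t-1)}{3t-7}$, which you have no way to verify for $B_1$, and in any case there is nothing contradictory about a block all of whose $k$-cycles pass through two prescribed vertices (e.g.\ $B_1$ could itself be a single $k$-cycle through $v$ and $w$). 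So as written the argument does not close. (A smaller issue: a face of $G$ incident to both $V(B_1)-\{w\}$ and $V(B_2)-\{w\}$ need not exist in the given embedding if other $w$-lobes separate the two blocks around $w$; one must either re-embed or settle for a joinable edge between two consecutive lobes at $w$, which suffices.)

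The case you are stuck on is in fact easy, but for a different reason: since the block--cut tree of $G$ is a tree, a third block $B_3$ can meet at most one of $B_1,B_2$ in a vertex other than $w$ (if $V(B_3)\cap V(B_1)=\{v\}$ with $v\neq w$, the unique tree path from $B_2$ to $B_3$ passes through $w$, $B_1$ and $v$, whence $V(B_3)\cap V(B_2)=\emptyset$). As $C''\subseteq B_3$ avoids $w$ and $V(C'')\cap V(B_j)\subseteq V(B_3)\cap V(B_j)$, the cycle $C''$ is vertex-disjoint from at least one of $B_1,B_2$, and pairing it with a $C_k$ in that block already gives a $2C_k$ in $G$. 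The paper sidesteps this entire dissection by listing \emph{all} blocks of $G$ containing a $C_k$ at the outset, showing they all share $w$, and concluding that every $C_k$ of $G$ passes through $w$; then every $k$-cycle of $G+a_1a_2$ also passes through $w$, so $G+a_1a_2$ is still $2C_k$-free and edge-maximality is contradicted directly, with no need to analyze the hypothetical $2C_k$ in the augmented graph.
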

\begin{proof}
Suppose to the contrary that there are $s\geq 2$ blocks, say $B_1,B_2,\ldots,B_s$, that contain $C_k$.
It is clear that $B_1,B_2,\ldots,B_s$ share a common cut-vertex $w$; otherwise, there are two integer $i,j\in[s]$ such that $B_i\cap B_j=\emptyset$, and hence $G$ contains a $2C_k$, a contradiction.
Therefore, every $C_k$ in $G$ contains $w$.
For $i\in[s]$, let $a_i$ be a vertex that belongs to the boundary of the out face of $B_i$ such that $a_i\neq w$.
Then $a_1a_2$ is not an edge of $G$.
However, for each $k$-cycle $C$ of $G+a_1a_2$ with $a_1a_2\in E(C)$, $C$ must contain the vertex $w$. This yields that $G+a_1a_2$ is also $2C_k$-free planar graph, contradicting the assumption $e(G)=ex_{\mathcal{P}}(n,2C_k)$.
\end{proof}

Without loss of generality, assume that $B$ is the unique block of $G$ containing $C_k$, and $B'$ is a leaf-block of $G$ with $B'\neq B$.
Then there is a unique cut-vertex of $G$, say $w$, that belongs to $B'$, and $B'$ is $C_k$-free.
Let $G_1=G-(V(B')-w)$ and $G_2=B'$.
Then $e(G)=e(G_1)+e(G_2)$ and $n=|G_1|+|G_2|-1$.

If $|G_1|,|G_2|\geq k^{\log_23}$, then $e(G_2)\leq 3|G_2|-6-\frac{|G_2|}{4k^{\log_23}}$, and by induction, $e(G_1)\leq 3|G_1|-6-\frac{|G_1|}{8k^{\log_23}}+k^3$.
Hence, 
$$e(G)=e(G_1)+e(G_2)< 3n-6-\frac{n}{8k^{\log_23}}+k^3.$$
If $|G_1|<k^{\log_23}$ and $|G_2|\geq k^{\log_23}$, then 
$$e(G)=e(G_1)+e(G_2)\leq 3|G_1|-6+3|G_2|-6-\frac{|G_2|}{4k^{\log_23}}< 3n-6-\frac{n}{8k^{\log_23}}+k^3.$$
If $|G_1|\geq k^{\log_23}$ and $|G_2|< k^{\log_23}$, then 
$$e(G)=e(G_1)+e(G_2)\leq 3|G_1|-6-\frac{|G_1|}{8k^{\log_23}}+k^3+3|G_2|-6< 3n-6-\frac{n}{8k^{\log_23}}+k^3.$$
If $|G_1|,|G_2|< k^{\log_23}$, then $n<2k^{\log_23}$, contradicting Ineq. (\ref{eq-2-1}).
The proof thus completed.

\section{Concluding Remark}

For a planar graph $G$, define $\Pi(G)=\overline{\lim}_{n\rightarrow \infty}\frac{ex_{\mathcal{P}}(n,G)}{n}$.
It is clear that $1\leq \Pi(G)\leq 3$ for any planar graph $G$ with $e(G)\geq 3$ (consider two types of planar graphs: star and path).
Since $ex_{\mathcal{P}}(n,2C_k)=\left[1-\Theta(k^{\log_23})^{-1}\right] n$, it follows that
$\lim_{k\rightarrow \infty}\Pi(2C_k)=3$.
Inspired by the lower bound of $ex_{\mathcal{P}}(n,2C_k)$, it is worth to investigate whether  $\Pi(2C_k)=3-\left[(2k/7)^{\log_23}-2\right]^{-1}$ holds. In addition,  we introduce two interesting problems as follows.
\begin{itemize}
  \item If $\Pi(G)=3$, does it follow that $ex_{\mathcal{P}}(n,G)=3n-6$ when $n$ is sufficiently large?
  \item For any planar graph $G$, does there exist an positive integer $C=C(H)$ such that the inequality $|ex_{\mathcal{P}}(n,G)-\Pi(2C_k)\cdot n|\leq C$ holds for all $n$?
\end{itemize}
It's worth noting that if the second problem holds, then for any planar graph $G$, the expression for $ex_{\mathcal{P}}(n,G)$ would approximately be a linear function of $n$.

\section{Acknowledgements}

Ping Li is supported by the National Natural Science Foundation of China (No. 12201375).

\end{document}